\theoremstyle{plain}
\newtheorem{theorem}{Theorem}[section]
\newtheorem{lemma}[theorem]{Lemma}
\newtheorem{proposition}[theorem]{Proposition}
\newtheorem{conjecture}[theorem]{Conjecture}
\newtheorem{problem}[theorem]{Problem}
\theoremstyle{remark}
\newtheorem{remark}[theorem]{Remark}
\numberwithin{equation}{section}
\newcommand{\seclabel}[1]{\label{sec:#1}}   
\newcommand{\thmlabel}[1]{\label{thm:#1}}   
\newcommand{\lemlabel}[1]{\label{lem:#1}}   
\newcommand{\prplabel}[1]{\label{prp:#1}}   
\newcommand{\tablabel}[1]{\label{tab:#1}}   
\newcommand{\eqnlabel}[1]{\label{eqn:#1}}   
\newcommand{\secref}[1]{\ref{sec:#1}}   
\newcommand{\thmref}[1]{\ref{thm:#1}}   
\newcommand{\lemref}[1]{\ref{lem:#1}}   
\newcommand{\prpref}[1]{\ref{prp:#1}}   
\newcommand{\eqnref}[1]{\eqref{eqn:#1}} 
\newcommand{\by}[1]{\overset{\eqnref{#1}}=}  
\newcommand{\Aut}{\mathop{\mathrm{Aut}}}			
\newcommand{\Mlt}{\mathop{\mathrm{Mlt}}}         
\newcommand{\Inn}{\mathop{\mathrm{Inn}}}			
\newcommand{\Fix}{\mathrm{Fix}}         
\def\mat#1#2#3#4{\left(\begin{array}{rrrr}#1 & #2\\ #3& #4\end{array}\right)}
\def\trace{\mathrm{tr}}
\newcommand{\setof}[2]{\{#1\,|\,#2\}}   
\newcommand{\ld}{\backslash}					
\newcommand{\inv}{^{-1}}						
\newcommand{\sbl}[1]{\langle#1\rangle}			
\newcommand{\Mace}{\textsc{Mace4}}             
\newcommand{\Prover}{\textsc{Prover9}}         
\title{Nilpotency in automorphic loops of prime power order}
\author{P\v{r}emysl Jedli\v{c}ka$^{\dagger}$}
\author{Michael Kinyon}
\author{Petr Vojt\v{e}chovsk\'y}
\address[Jedli\v{c}ka]{Department of Mathematics \\
Faculty of Engineering \\ Czech University of Life Sciences \\
Kam\'yck\'a 129 \\ 165 21 Prague 6–-Suchdol \\ Czech Republic}
\email[Jedli\v{c}ka]{\url{jedlickap@tf.czu.cz}}
\address[Kinyon and Vojt\v{e}chovsk\'y]{Department of Mathematics \\
University of Denver \\ 2360 S Gaylord St \\ Denver, Colorado 80208 USA}
\email[Kinyon]{\url{mkinyon@math.du.edu}}
\email[Vojt\v{e}chovsk\'y]{\url{petr@math.du.edu}}
\thanks{${}^{\dagger}$ Supported by the Grant Agency of the Czech Republic, grant no.~201/07/P015.}
\begin{document}

\begin{abstract}
A loop is automorphic if its inner mappings are automorphisms. Using so-called associated operations, we show that every commutative automorphic loop of odd prime power order is centrally nilpotent. Starting with suitable elements of an anisotropic plane in the vector space of $2\times 2$ matrices over the field of prime order $p$, we construct a family of automorphic loops of order $p^3$ with trivial center.
\end{abstract}

\keywords{automorphic loop, commutative automorphic loop, A-loop, central nilpotency}

\subjclass[2010]{Primary: 20N05.}

\maketitle

\section{Introduction}
\seclabel{intro}

A classical result of group theory is that $p$-groups are (centrally) nilpotent. The analogous result does not hold for loops.

The first difficulty is with the concept of a $p$-loop. For a prime $p$, a finite group has order a power of $p$ if and only if each of its elements has order a power of $p$, so $p$-groups can be defined in two equivalent ways. Not so for loops, where the order of an element might not be well defined, and even if it is, the two natural $p$-loop concepts might not be equivalent.

However, there exist several varieties of loops where the analogy with group theory is complete. For instance, a Moufang loop has order a power of $p$ if and only if each of its elements has order a power of $p$, and, moreover, every Moufang $p$-loop is nilpotent \cite{Gl2, GlW}.

We showed in \cite[Thm. 7.1]{JKV2} that a finite commutative automorphic loop has order a power of $p$ if and only if each of its elements has order a power of $p$. The same is true for automorphic loops, by \cite{KiKuPhVo}, \emph{provided} that $p$ is odd; the case $p=2$ remains open.

In this paper we study nilpotency in automorphic loops of prime power order. We prove:

\begin{theorem}\label{Th:Main}
Let $p$ be an odd prime and let $Q$ be a finite commutative automorphic $p$-loop. Then $Q$ is centrally nilpotent.
\end{theorem}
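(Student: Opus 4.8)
The natural frame is an induction on $|Q|$. The case $|Q|=1$ is vacuous, so suppose $|Q|>1$. Quotients of commutative automorphic loops by normal subloops are again commutative automorphic loops, and $Z(Q)\trianglelefteq Q$; so once one knows that $Z(Q)\neq 1$, the loop $Q/Z(Q)$ is a commutative automorphic loop of prime power order strictly smaller than $|Q|$, hence centrally nilpotent by the inductive hypothesis, and central nilpotency then lifts back to $Q$. Thus the theorem reduces to the claim:

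\medskip
\noindent\emph{every nontrivial commutative automorphic $p$-loop with $p$ odd has nontrivial center.}
\medskip

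To prove this I would exploit that the center of any loop is precisely the set of elements fixed by all of its inner mappings. Since $Q$ is commutative, $\Inn(Q)$ is generated by the mappings $L(xy)^{-1}L(x)L(y)$, $x,y\in Q$, all of which are automorphisms of $Q$, and $Z(Q)=\Fix(\Inn(Q))$. If $\Inn(Q)$ is a $p$-group, then the usual counting argument for a $p$-group acting on a finite set applies to the action of $\Inn(Q)$ on the $|Q|=p^n$-element set $Q$: the number of fixed points is congruent to $p^n\equiv 0\pmod p$, and since $1$ is one of them there must be at least $p$ of them, so $|Z(Q)|\ge p$ and we are done. Equivalently, since $\Mlt(Q)$ acts transitively on $Q$ with point stabilizer $\Inn(Q)$, it is enough to show that $\Mlt(Q)$ is a $p$-group.

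The heart of the matter is therefore to show that $\Inn(Q)$ — equivalently $\Mlt(Q)$ — is a $p$-group (or, failing a direct attack on this, to produce a nontrivial central element by other means), and either way this is where the associated operations enter. Because $|Q|$ is odd, squaring is a bijection of $Q$ (commutative automorphic loops are power-associative and satisfy Lagrange), so the halving map $x\mapsto x^{1/2}$ is available; using it one passes from $(Q,\cdot)$ to an associated structure — a Bruck loop, or a group, assembled from the translations of $Q$ — to which the odd-order machinery applies and whose multiplication group is isomorphic to, or tightly coupled with, $\Mlt(Q,\cdot)$. The plan is to write the left translations $L(x)$ of $Q$ explicitly in terms of this associated operation, deduce that the group they generate is forced to have $p$-power order, and transport the conclusion back to $\Mlt(Q)$. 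I expect this identification to be the main obstacle: it is the step that genuinely uses $p$ odd — via the halving map and the exclusion of the $p=2$ pathology mentioned in the introduction — and the step where the loop-theoretic complications (noncommuting translations, associators, the exact form of the automorphic identities) must all be handled at once. A final bookkeeping point is to verify that forming the associated operation is compatible with passing to the quotient by a central subloop, so that the induction in the first paragraph is legitimate.
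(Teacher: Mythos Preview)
Your reduction is sound: the induction on $|Q|$, the observation that quotients remain commutative automorphic, and the reformulation ``it suffices that $Z(Q)\neq 1$'' (equivalently that $\Inn Q$, or $\Mlt Q$, is a $p$-group) are all correct. You also correctly identify the associated Bruck loop $(Q,\circ)$ as the key tool and that odd order is what makes it available.

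Where your plan diverges from the paper is in \emph{what} you try to transfer from $(Q,\circ)$ to $(Q,\cdot)$. You propose to show that $\Mlt(Q,\cdot)$ is a $p$-group by expressing the translations $L_x$ in terms of the Bruck operation and arguing that the group they generate has $p$-power order. This is exactly the step you flag as ``the main obstacle,'' and it is a genuine one: the multiplication groups $\Mlt(Q,\cdot)$ and $\Mlt(Q,\circ)$ are not known a priori to coincide or to be comparable, and the relation $x\circ y=(y^2 P_x)^{1/2}$ lets you write $L_x^{\circ}$ in terms of $\Mlt(Q,\cdot)$ but not obviously the other way around. In fact, the statement ``$\Mlt Q$ is a $p$-group'' is Cs\"org\H{o}'s theorem, proved after this paper by a different method (connected transversals); so your route is viable but is the harder one.

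The paper sidesteps this by transferring \emph{centers} rather than multiplication groups. It proves directly, via a chain of translation identities (Lemmas~\lemref{483}--\lemref{close} culminating in Lemma~\lemref{inclusion1}), that $Z(Q,\circ)\subseteq Z(Q,\cdot)$; the reverse inclusion is easy. Hence $Z(Q,\circ)=Z(Q,\cdot)$, and iterating (your ``bookkeeping point'' that the $\circ$-construction passes to central quotients is exactly what makes this iteration work) gives $Z_n(Q,\circ)=Z_n(Q,\cdot)$ for all $n$. Now Glauberman's theorem says the Bruck $p$-loop $(Q,\circ)$ is centrally nilpotent, and since the two upper central series coincide, so is $(Q,\cdot)$. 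No statement about $\Mlt Q$ is needed.

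In short: your framework is right, but the concrete step you leave open is harder than necessary. The paper's insight is that comparing the centers of the two operations is both tractable (a handful of identities in $L_x$, $P_x$, $D_x$) and sufficient.
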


Since there is a (unique) commutative automorphic loop of order $2^3$ with trivial center, cf. \cite{JKV}, Theorem \ref{Th:Main} is best possible in the variety of commutative automorphic loops. (The situation for $p=2$ is indeed complicated in commutative automorphic loops. By
\cite[Prop. 6.1]{JKV}, if a nonassociative finite simple commutative automorphic loop exists, it has exponent two. We now know that no nonassociative finite simple commutative automorphic loop of order less than $2^{12}$ exists \cite{JoKiNaVo}.)

In fact, Theorem \ref{Th:Main} is best possible even in the variety of automorphic loops, because for every prime $p$ we construct here a family of automorphic loops of order $p^3$ with trivial center.

\subsection{Background}
\seclabel{background}

A \emph{loop} $(Q,\cdot)$ is a set $Q$ with a binary operation
$\cdot$ such that (i) for each $x\in Q$, the
\emph{left translation} $L_x : Q\to Q$; $y\mapsto yL_x = xy$
and the \emph{right translation} $R_x : Q\to Q$; $y\mapsto yR_x = yx$
are bijections, and (ii) there exists $1\in Q$ satisfying
$1\cdot x = x\cdot 1 = x$ for all $x\in Q$.

The left and right translations generate the \emph{multiplication group}
$\Mlt Q = \sbl{L_x, R_x \mid x\in Q}$. The \emph{inner mapping group}
$\Inn Q = (\Mlt Q)_1$ is the stabilizer of $1\in Q$.
Standard references for the theory of loops are
\cite{Belousov, Bruck, Pflugfelder}.

A loop $Q$ is \emph{automorphic} (or sometimes just an \emph{A-loop}) if every inner mapping of $Q$ is an
automorphism of $Q$, that is, $\Inn Q \leq \Aut Q$.

The study of automorphic loops was initiated by Bruck and Paige \cite{BP}.
They obtained many basic results, not the least of
which is that automorphic loops are \emph{power-associative}, that is, for all $x$
and all integers $m,n$, $x^m x^n = x^{m+n}$.
In power-associative loops, the \emph{order} of an element
may be defined unambiguously.

For commutative automorphic loops, there now exists a
detailed structure theory \cite{JKV}, as well as constructions
and small order classification results \cite{JKV2}.

Informally, the \emph{center} $Z(Q)$ of a loop $Q$ is the set of all elements
of $Q$ which commute and associate with all other elements. It can be characterized as
$Z(Q) = \Fix(\Inn Q)$, the set of fixed points of the inner mapping group.
(See \S\secref{prelims} for the more traditional definition.)

The center is a \emph{normal} subloop of $Q$, that is, $Z(Q)\varphi = Z(Q)$ for every $\varphi\in\Inn Q$.
Define $Z_0(Q)=\{1\}$, and $Z_{i+1}(Q)$, $i\ge 0$, as the preimage of $Z(Q/Z_i(Q))$
under the canonical projection. This defines the \emph{upper central series}
\[
1 \leq Z_1(Q) \leq Z_2(Q) \leq \cdots \leq Z_n(Q) \leq \cdots \leq Q
\]
of $Q$. If for some $n$ we have $Z_{n-1}(Q) < Z_n(Q) = Q$ then $Q$ is said to be
\emph{(centrally) nilpotent of class} $n$.

\subsection{Summary}

The proof of our main result, Theorem \ref{Th:Main}, is based on a construction from \cite{JKV}. On each commutative automorphic loop $(Q,\cdot)$ which is uniquely $2$-divisible (\emph{i.e.}, the squaring map $x\mapsto x\cdot x$ is a permutation), there
exists a second loop operation $\circ$ such that $(Q,\circ)$ is a Bruck loop (see \S\secref{bruck}), and such that powers of elements in $(Q,\cdot)$ coincide with those in $(Q,\circ)$.

Glauberman \cite{Gl1} showed that for each odd prime $p$ a finite Bruck $p$-loop
is centrally nilpotent. Theorem \ref{Th:Main} will therefore follow immediately from this and from the following result:

\begin{theorem}
\thmlabel{centers}
Let $(Q,\cdot)$ be a uniquely $2$-divisible commutative automorphic loop with associated
Bruck loop $(Q,\circ)$. Then $Z_n(Q,\circ) = Z_n(Q,\cdot)$ for every $n\ge 0$.
\end{theorem}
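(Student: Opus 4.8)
The plan is to induct on $n$, the case $n=0$ being trivial since $Z_0=\{1\}$ in both loops. The inductive step rests on two facts, which I isolate. \textbf{(A)}: for \emph{every} uniquely $2$-divisible commutative automorphic loop $P$ with associated Bruck loop $(P,\circ)$ one has $Z(P,\cdot)=Z(P,\circ)$ as subsets of $P$. \textbf{(B)}: the passage to the associated Bruck loop commutes with quotients by normal subloops that are themselves uniquely $2$-divisible, in the sense that if $N$ is such a subloop of $(P,\cdot)$ then $(P/N,\cdot)$ is again a uniquely $2$-divisible commutative automorphic loop and its associated Bruck loop is $(P,\circ)/N$ with the quotient operation. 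Granting these, the induction runs as follows: assuming $Z_n(Q,\cdot)=Z_n(Q,\circ)=:N$, this subloop is normal in $(Q,\cdot)$ (as $Z_n(Q,\cdot)$) and in $(Q,\circ)$ (as $Z_n(Q,\circ)$), and is uniquely $2$-divisible (checked below); by \textbf{(B)} the associated Bruck loop of $(Q/N,\cdot)$ is $(Q/N,\circ)$; applying \textbf{(A)} to $P=(Q/N,\cdot)$ gives $Z(Q/N,\cdot)=Z(Q/N,\circ)$; finally, since central elements multiply identically in $\cdot$ and $\circ$, the two canonical projections $Q\to Q/N$ coincide as set maps, so pulling the equal centers back along this common projection yields $Z_{n+1}(Q,\cdot)=Z_{n+1}(Q,\circ)$.

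For \textbf{(B)}, the point is that the associated operation is defined in \cite{JKV} by a formula involving only the operation $\cdot$ and the (uniquely defined) square-root map $x\mapsto x^{1/2}$. If $N$ is normal and uniquely $2$-divisible, then $P/N$ is a uniquely $2$-divisible commutative automorphic loop, the only nonformal point being unique $2$-divisibility of the quotient, which follows since both $N$ and $P/N$ are uniquely $2$-divisible. A loop homomorphism automatically respects $\cdot$, and, by uniqueness of square roots, also respects $x\mapsto x^{1/2}$: if $\pi\colon P\to P/N$ is the projection then $\pi(x^{1/2})^2=\pi(x)$ forces $\pi(x^{1/2})=\pi(x)^{1/2}$. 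Hence $\pi$ transports the defining formula of $\circ$ on $P$ to that on $P/N$, i.e. $\pi(x\circ y)=\pi(x)\circ\pi(y)$. To feed this into the induction I also need each $Z_n(Q)$ to be uniquely $2$-divisible: since inner mappings are automorphisms they commute with squaring, so if $w^2\in Z(Q)$ and $\varphi\in\Inn Q$ then $\varphi(w)^2=\varphi(w^2)=w^2$, whence $\varphi(w)=w$ by unique $2$-divisibility and $w\in\Fix(\Inn Q)=Z(Q)$; thus $Z(Q)$ is closed under square roots, hence uniquely $2$-divisible, and an easy induction up the series (using centrality of $Z_n$ to split the square of a product) propagates this. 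Lastly, a short computation from the defining formula shows $x\circ z=x\cdot z$ for $z\in Z(P,\cdot)$, which gives the coincidence of cosets and projections used above.

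The heart of the matter is \textbf{(A)}. One inclusion, $Z(P,\cdot)\subseteq Z(P,\circ)$, is a direct computation: if $a$ is central in $(P,\cdot)$ then so is $a^{1/2}$ (by the argument of the previous paragraph applied to $w=a^{1/2}$), and substituting central elements and their square roots into the defining formula of $\circ$ shows that $a$ commutes and associates in $(P,\circ)$. I expect the reverse inclusion $Z(P,\circ)\subseteq Z(P,\cdot)$ to be \emph{the main obstacle}, because, unlike $\circ$, the operation $\cdot$ is not recoverable from $\circ$ and the square-root map by a term, so one cannot simply run the previous computation backwards. The route I would take is to compare the two loops at the level of translations: express the $\circ$-translations, or at least the $\circ$-inner mappings, in terms of the $\cdot$-translations (as is natural for associated operations, cf. \cite{JKV}), and thereby compare $\Fix(\Inn(P,\circ))$ with $\Fix(\Inn(P,\cdot))$ directly; failing that, one can try to combine the left Bol and automorphic-inverse identities of $(P,\circ)$ with the defining formula to force the associator identity $a(xy)=(ax)y$ for $a\in Z(P,\circ)$. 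Once \textbf{(A)} is established, the induction described above completes the proof.
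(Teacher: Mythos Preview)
Your overall strategy matches the paper's: reduce by induction on $n$ to the equality of centers $Z(Q,\cdot)=Z(Q,\circ)$, with the inclusion $Z(Q,\cdot)\subseteq Z(Q,\circ)$ being the easy one and the reverse being the hard one. Your treatment of the inductive step via \textbf{(B)} is in fact more careful than the paper's single-sentence reduction, which tacitly uses that quotients of uniquely $2$-divisible commutative automorphic loops by central subloops are again such loops and that the associated Bruck operation passes to quotients.

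However, the proposal stops precisely where the real work begins. You correctly flag $Z(P,\circ)\subseteq Z(P,\cdot)$ as the main obstacle, but you supply only a plan, not a proof. The paper devotes a chain of six technical lemmas to this single inclusion. The organizing idea is indeed the one you gesture at---comparing the two loops at the level of translations---but made concrete through the permutations $P_x=L_xL_{x^{-1}}^{-1}$: one first characterizes $a\in Z(P,\circ)$ by the commutation $P_aP_x=P_xP_a$ for all $x$, and then, via several delicate identities in commutative automorphic loops (for instance $(a\backslash x)L_{a\backslash x^{-1},a}=(x\backslash a)^{-1}$ and $x^{-1}\cdot xP_a=a^2$ for $a\in Z(P,\circ)$), proves successively that $L_a=L_a^{\circ}$, hence $L_a^n=L_{a^n}$, hence $P_{xa}=P_xP_a$, and finally $L_{a^2}L_x=L_xL_{a^2}$, which places $a^2\in N_\rho(P,\cdot)=Z(P,\cdot)$. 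Unique $2$-divisibility then forces $a\in Z(P,\cdot)$. None of these intermediate identities is routine, and your proposal neither supplies nor replaces them; as written, the argument has a genuine gap at its central step.
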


After reviewing preliminary results in \S\secref{prelims}, we discuss the associated Bruck loop in \S\secref{bruck} and prove Theorem \thmref{centers} in \S\secref{proofs}.

In \S\secref{noncomm}, we use elements of anisotropic planes in the vector space of $2\times 2$ matrices over $GF(p)$ to obtain automorphic loops of order $p^3$ with trivial center. We obtain one such loop for $p=2$ (this turns out to be the unique commutative automorphic loop of order $2^3$ with trivial center), two such loops for $p=3$, three such loops for $p\ge 5$, and at least one (conjecturally, three) such loop for every prime $p\ge 7$.

Finally, we pose open problems in \S\secref{problems}.

\section{Preliminaries}
\seclabel{prelims}

In a loop $(Q,\cdot)$, there are various subsets of interest:
\begin{center}
\begin{tabular}{cllcl}
$\bullet$ & the \emph{left nucleus} &
$N_{\lambda}(Q)$ &$=$& $\setof{a\in Q}{ax\cdot y = a\cdot xy,\ \forall x,y\in Q}$ \\
$\bullet$ & the \emph{middle nucleus} &
$N_{\mu}(Q)$ &$=$ & $\setof{a\in Q}{xa\cdot y = x\cdot ay,\ \forall x,y\in Q}$ \\
$\bullet$ & the \emph{right nucleus} &
$N_{\rho}(Q)$ &$=$& $\setof{a\in Q}{xy\cdot a = x\cdot ya,\ \forall x,y\in Q}$ \\
$\bullet$ & the \emph{nucleus} &
$N(Q)$ &$=$ & $N_{\lambda}(Q)\cap N_{\mu}(Q)\cap N_{\rho}(Q)$ \\
$\bullet$ & the \emph{commutant} &
$C(Q)$ &$=$ & $\setof{a\in Q}{ax = xa,\ \forall x\in Q}$ \\
$\bullet$ & the \emph{center} &
$Z(Q)$ &$=$& $N(Q)\cap C(Q)$\,.
\end{tabular}
\end{center}
\noindent The commutant is not necessarily a subloop, but the nuclei are.

\begin{proposition}\cite{BP}
\prplabel{aut-nuc}
In an automorphic loop $(Q,\cdot)$,  $N_{\lambda}(Q) = N_{\rho}(Q) \le N_{\mu}(Q)$.
In a commutative automorphic loop $(Q,\cdot)$, $Z(Q) = N_{\lambda}(Q)$.
\end{proposition}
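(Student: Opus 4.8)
The plan is to prove the two assertions in turn, deducing the second from the first and reducing the first to a single inclusion. To begin, each of $N_\lambda$, $N_\mu$, $N_\rho$ is a characteristic subloop of $Q$, so in an automorphic loop — where $\Inn Q\le\Aut Q$ — each is a \emph{normal} subloop. Next, writing $Q^{\mathrm{op}}$ for the opposite loop $x\ast y=yx$, one has $\Mlt(Q^{\mathrm{op}})=\Mlt(Q)$, hence $\Inn(Q^{\mathrm{op}})=\Inn(Q)$ and $\Aut(Q^{\mathrm{op}})=\Aut(Q)$, so $Q^{\mathrm{op}}$ is automorphic whenever $Q$ is; moreover $N_\lambda(Q^{\mathrm{op}})=N_\rho(Q)$ and $N_\mu(Q^{\mathrm{op}})=N_\mu(Q)$. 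It therefore suffices to prove $N_\lambda(Q)\subseteq N_\rho(Q)\cap N_\mu(Q)$ for every automorphic loop $Q$: applying this to $Q^{\mathrm{op}}$ gives $N_\rho(Q)\subseteq N_\lambda(Q)\cap N_\mu(Q)$, and the two inclusions together yield $N_\lambda=N_\rho\le N_\mu$.

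\emph{Reformulation via inner mappings.} Let $T_x=R_xL_x\inv$, $L_{x,y}=L_xL_yL_{yx}\inv$ and $R_{x,y}=R_xR_yR_{xy}\inv$ be the standard generators of $\Inn Q$. A direct computation shows, for $a\in Q$: $a\in N_\lambda$ iff $L_a$ commutes with every right translation, equivalently iff $a$ is fixed by every $R_{x,y}$; $a\in N_\rho$ iff $R_a$ commutes with every left translation, equivalently iff $a$ is fixed by every $L_{x,y}$; and $a\in N_\mu$ iff $L_aL_x=L_{xa}$ for all $x$, equivalently iff $L_{a,x}=\id{Q}$ for all $x$. The automorphic hypothesis says exactly that each $T_x$, $L_{x,y}$, $R_{x,y}$ is an automorphism; I will use in particular the conjugation rules $T_y\inv L_xT_y=L_{xT_y}$ and $T_y\inv R_xT_y=R_{xT_y}$.

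\emph{The heart of the matter.} Fix $a\in N_\lambda$, so $L_a$ commutes with every right translation and $L_{ax}=L_xL_a$ for all $x$. From $yT_a=a\backslash(ya)$ we get $ya=a\cdot(yT_a)$, and applying the $N_\lambda$ identity and then the conjugation rule for $T_a=R_aL_a\inv$ gives, for all $y$,
\[
L_{ya}\;=\;L_{yT_a}L_a\;=\;(T_a\inv L_yT_a)L_a\;=\;L_a\,(R_a\inv L_yR_a).
\]
Comparing this with the reformulation of $N_\mu$, we see that $a\in N_\mu$ holds precisely when $R_a\inv L_yR_a=L_y$ for all $y$ — that is, precisely when $R_a$ commutes with every left translation, which is the condition $a\in N_\rho$. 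Hence, for $a\in N_\lambda$, the conditions $a\in N_\mu$ and $a\in N_\rho$ coincide, and (with the reduction above) the whole proposition comes down to the single inclusion $N_\lambda\subseteq N_\rho$. Proving this is the crux, and is where the automorphic hypothesis does its essential work: one feeds the displayed identity back through the automorphism property of a further, suitably chosen inner mapping — using also that for every $x$ the automorphism $R_{x,a}$ fixes $1$ and fixes every element of $N_\lambda$ (because $b\in N_\lambda$ forces $(bx)a=b(xa)$, i.e.\ $bR_{x,a}=b$) — in order to conclude that $R_{x,a}=\id{Q}$ for every $x$, i.e.\ $a\in N_\rho$. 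This last step is the substance of Bruck and Paige's theorem, and I would reproduce their computation here. I expect it to be the main obstacle, for a structural reason: the clean descriptions of $N_\lambda$ and $N_\rho$ as the common fixed-point sets of $\{R_{x,y}\}$ and $\{L_{x,y}\}$ have no counterpart for $N_\mu$, so relating the nuclei cannot be done by fixed-point bookkeeping alone and the automorphism identities must be used in an essential, non-formal way.

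\emph{The commutative case.} If $(Q,\cdot)$ is commutative then $C(Q)=Q$, so $Z(Q)=N(Q)\cap C(Q)=N(Q)=N_\lambda(Q)\cap N_\mu(Q)\cap N_\rho(Q)$, which equals $N_\lambda(Q)$ by the first assertion. (Here the first assertion is not actually needed: in any commutative loop $N_\lambda=N_\rho$, and then $N_\lambda\subseteq N_\mu$, follow from short direct computations obtained by rewriting products through $uv=vu$.)
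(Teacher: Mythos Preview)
The paper does not actually prove this proposition; it simply cites Bruck and Paige \cite{BP}. So there is no ``paper's own proof'' to compare against. That said, your attempt is itself incomplete, and the gap is exactly where you locate it.

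Your reductions are correct and nicely organized: passing to the opposite loop reduces the claim to $N_\lambda\subseteq N_\rho\cap N_\mu$; and for $a\in N_\lambda$ your identity $L_{ya}=L_a(R_a^{-1}L_yR_a)$ (obtained via the automorphism $T_a$) does show that $a\in N_\mu\iff a\in N_\rho$. So everything indeed hinges on proving $N_\lambda\subseteq N_\rho$ in an automorphic loop. But you do not prove this---you write ``I would reproduce their computation here'' and then stop. The observation you offer, that each $R_{x,a}$ is an automorphism fixing $N_\lambda$ pointwise, is true but not enough: an automorphism that fixes a normal subloop pointwise need not be the identity, so this alone cannot yield $R_{x,a}=\id{Q}$. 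The Bruck--Paige argument for $N_\lambda\subseteq N_\rho$ is genuinely the substance of the result, and until you carry it out (or replace it with a different argument) the proof is not complete. Your commutative-case deduction of $Z(Q)=N_\lambda(Q)$ from the first assertion is fine.
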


We will also need the following (well known) characterization of $C(Q)\cap N_\rho(Q)$:

\begin{lemma}
\lemlabel{nucr-comm}
Let $(Q,\cdot)$ be a loop. Then $a \in C(Q)\cap N_{\rho}(Q)$ if and only if
$L_a L_x = L_x L_a$ for all $x\in Q$.
\end{lemma}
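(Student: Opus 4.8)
The plan is to translate the operator identity into a single element-wise identity and then extract each implication by specializing and by moving the factor $a$ around using commutativity. With translations acting on the right as in the paper, for all $y \in Q$ one has $y L_a L_x = x(ay)$ and $y L_x L_a = a(xy)$, so $L_a L_x = L_x L_a$ holds for every $x \in Q$ precisely when
\[
x(ay) = a(xy) \qquad \text{for all } x,y \in Q;
\]
call this identity $(\star)$. Everything then reduces to manipulating $(\star)$.

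For the forward implication, assume $a \in C(Q) \cap N_{\rho}(Q)$. Then for all $x,y \in Q$,
\[
a(xy) = (xy)a = x(ya) = x(ay),
\]
where the first and third equalities use $a \in C(Q)$ and the second uses $a \in N_{\rho}(Q)$; this is exactly $(\star)$.

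For the converse, assume $(\star)$. Setting $y = 1$ gives $xa = ax$ for all $x$, so $a \in C(Q)$. Using this, the left-hand side of $(\star)$ equals $x(ya)$ and the right-hand side equals $(xy)a$, so $(xy)a = x(ya)$ for all $x,y \in Q$, i.e.\ $a \in N_{\rho}(Q)$. Hence $a \in C(Q)\cap N_{\rho}(Q)$.

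There is no real obstacle here: the argument is a two-line computation in each direction. The only thing to watch is the bookkeeping — keeping straight that $L_a$ is left multiplication by $a$ (acting on the right of its argument under composition), and, since the defining identity of $N_{\rho}(Q)$ requires the distinguished element on the far right, invoking commutativity of $a$ to slide $a$ into that position in both directions.
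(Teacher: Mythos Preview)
Your proof is correct and follows essentially the same approach as the paper: both directions are obtained by the same chain $a(xy) = (xy)a = x(ya) = x(ay)$ (read in the appropriate direction), and the converse begins by specializing $y=1$ to obtain $a \in C(Q)$.
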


\begin{proof}
If $a \in C(Q)\cap N_{\rho}(Q)$, then for all $x,y\in Q$,
$a\cdot xy = xy\cdot a = x\cdot ya = x\cdot ay$, that is,
$L_a L_x = L_x L_a$. Conversely, if $L_a L_x = L_x L_a$ holds,
then applying both sides to $1$ gives $xa = ax$, \emph{i.e.}, $a\in C(Q)$,
and then $xy\cdot a = a\cdot xy = x\cdot ay = x\cdot ya$, \emph{i.e.},
$a\in N_{\rho}(Q)$.
\end{proof}

The inner mapping group $\Inn Q$ of a loop $Q$ has a standard set of generators
\begin{displaymath}
    L_{x,y} = L_x L_y L_{yx}\inv,\quad
    R_{x,y} = R_x R_y R_{xy}\inv,\quad
    T_x  = R_xL_x\inv,
\end{displaymath}
for $x$, $y\in Q$. The property of being an automorphic loop can therefore be expressed equationally by demanding that the permutations $L_{x,y}$, $R_{x,y}$, $T_x$ are homomorphisms. In particular, if $Q$ is a commutative loop then $Q$ is automorphic if and only if
\begin{displaymath}
    (uv)L_{x,y} = uL_{x,y}\cdot vL_{x,y}
\end{displaymath}
for every $x$, $y$, $u$, $v$.

We can conclude that (commutative) automorphic loops form a variety in the sense of universal algebra, and are therefore closed under subloops, products, and homomorphic images.

We will generally compute with translations whenever possible, but it will sometimes be convenient to work directly with the loop operations. Besides the loop multiplication, we also have the \emph{left division} operation $\ld : Q\times Q\to Q$ which satisfies
\[
x\ld (xy) = x (x\ld y) = y\,.
\]
The \emph{division permutations} $D_x : Q\to Q$ defined by
$y D_x = y\ld x$  are also quite useful, as is the \emph{inversion permutation} $J : Q\to Q$ defined by $xJ = x D_1 = x\inv$ in any power-associative loop.

If $Q$ is a commutative automorphic loop then for all $x$, $y\in Q$ we have
\begin{align}
    x L_{y,x} &= x,               \eqnlabel{lem1} \\
    L_{y,x} L_{x\inv} &= L_{x\inv} L_{y,x}, \eqnlabel{lem1.5} \\
    y L_{y,x} &= ((xy)\ld x)\inv, \eqnlabel{lem2}\\
    L_{x\inv,y\inv} &= L_{x,y}, \eqnlabel{laip}\\
    D_{x^2} &= D_x J D_x, \eqnlabel{Ds}
\end{align}
where the first two equalities follow from \cite[Lem. 2.3]{JKV}, \eqnref{lem2} from \cite[Lem. 2.5]{JKV}, \eqnref{laip} is an immediate consequence of \cite[Lem. 2.7]{JKV}, and \eqnref{Ds} is \cite[Lem. 2.8]{JKV}. In addition, commutative automorphic loops satisfy the \emph{automorphic inverse property}
\begin{equation}\eqnlabel{aip}
(xy)\inv = x\inv y\inv \quad\text{and}\quad (x\ld y)\inv = x\inv \ld y\inv,
\end{equation}
by \cite[Lem. 2.6]{JKV}.

Finally, as in \cite{JKV}, in a commutative automorphic loop $(Q,\cdot)$, it will be
convenient to introduce the permutations
\[
P_x = L_x L_{x\inv}\inv = L_{x\inv}\inv L_x,
\]
where the second equality follows from \cite[Lem. 2.3]{JKV}.

\begin{lemma}
\lemlabel{lem3}
For all $x,y$ in a commutative automorphic loop $(Q,\cdot)$
\begin{align}
(x\inv) P_{xy} &= xy^2, \eqnlabel{3.2} \\
x\cdot xP_y &= (xy)^2. \eqnlabel{lem3}
\end{align}
\end{lemma}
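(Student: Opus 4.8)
The plan is to work with the permutations $P_x$ and the identities accumulated in \S\secref{prelims}, deriving both parts of Lemma~\lemref{lem3} by translating the statements into statements about left translations and division permutations. The key preliminary observation is that $P_x = L_x L_{x\inv}\inv$, so $x \cdot xP_y = x \cdot ((xy^{-1})L_y)$ does not quite make sense directly; instead I would start from the \emph{meaning} of $P_y$: since $P_y = L_{y\inv}\inv L_y$, applying $P_y$ to $x$ gives $xP_y = (y\inv \ld x) L_y = y \cdot (y\inv \ld x)$. So $xP_y$ is the unique $z$ with $y\inv \ld z \cdot$... more usefully, $xP_y = y(y\inv\ld x)$, equivalently $y\inv\ld (xP_y) = y\inv \ld x$ is wrong; rather $y\inv \cdot (xP_y) \cdot$ — let me instead use that $z = xP_y$ satisfies $zL_{y\inv}\inv = xL_y\inv$, i.e. $y\inv\ld z = y\ld x$ — hmm. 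The cleanest route: $xP_y \cdot$ is characterized by $L_{y\inv}(xP_y) $... I would simply record $xP_y = y\cdot(y\inv\ld x)$ directly from $P_y = L_{y\inv}\inv L_y$ and proceed.

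For \eqnref{3.2}, I would aim to show $(x\inv)P_{xy} = xy^2$. Writing $z = xy$, the claim is $(x\inv)P_z = x z^2 / x$-ish; more precisely with $x\inv = z\inv y$ (using the AIP \eqnref{aip}, since $x\inv = (xy)\inv y = z\inv y$), we want $(z\inv y)P_z = xy^2$. I expect this to follow from combining $P_z = L_{z\inv}\inv L_z$ with the key identity \eqnref{lem2}, $yL_{y,x} = ((xy)\ld x)\inv$, after a suitable substitution, together with \eqnref{lem1.5} and the inverse identities \eqnref{laip}, \eqnref{aip}. The natural strategy is to rewrite $L_{y,x}$ via its definition $L_y L_x L_{xy}\inv$ and relate the composite $L_{z\inv}\inv L_z$ to such a product by choosing the variables so that $xy$ (the subscript there) becomes $z$ or $z\inv$. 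In fact \eqnref{3.2} can be read as: $P_{xy}$ sends $x\inv$ to $xy^2$, and $xy^2 = (xy)\cdot y = z \cdot (z\inv x\inv)\inv$; so the target is $zL$ applied appropriately. I would chase this through, using \eqnref{Ds} ($D_{x^2} = D_x J D_x$) to handle the squares if a direct approach stalls.

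For \eqnref{lem3}, $x \cdot xP_y = (xy)^2$: substituting the formula $xP_y = y(y\inv\ld x)$, the left side becomes $x\cdot(y(y\inv\ld x))$, and I want this to equal $(xy)^2 = (xy)(xy)$. Using commutativity this is $\big(x(y\inv\ld x)\big)$-type rearrangement — but rearranging products in a loop is exactly where nonassociativity bites, so the honest approach is again translation-based: $x\cdot xP_y = (xP_y)L_x = xP_yL_x$, and I want $xP_yL_x = x(L_{xy})^2$ evaluated at $1$... actually $(xy)^2 = xy\cdot xy = (1)L_{xy}^2$ only if $1 L_{xy} = xy$, yes, but I need $x$ not $1$ on the left. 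Better: $(xy)^2 = (xy) \cdot (xy)$ and I'd like to express $xP_y L_x$ as this. One clean identity to exploit is \eqnref{lem3.2} itself (once proved) evaluated at $x\inv$ in place of $x$: then with the AIP one gets a relation between $x P_y$ and squares that can be massaged into \eqnref{lem3}; alternatively, apply $J$ and use \eqnref{aip} to pass between the two formulas, since $((xy)^2)\inv = (x\inv y\inv)^2$. I would in fact try to \emph{derive \eqnref{lem3} from \eqnref{3.2}} directly: replace $x$ by $x\inv$ in \eqnref{3.2} to get $x P_{x\inv y} = x\inv y^2$... not quite; so a genuine short computation with translations will likely be needed.

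The main obstacle I anticipate is \eqnref{3.2}: it packages a square ($y^2$) on the right-hand side and an inverse ($x\inv$) on the left, so neither side is a pure translation-word, and the subscript $xy$ on $P_{xy}$ couples the two variables. The crux will be finding the right substitution into \eqnref{lem2} (or into \cite[Lem. 2.5]{JKV} more directly) so that $L_{y,x}$'s trailing factor $L_{xy}\inv$ aligns with $L_{(xy)\inv}$ appearing in $P_{xy} = L_{(xy)\inv}\inv L_{xy}$, at which point \eqnref{lem1.5} lets the factors commute and the $((xy)\ld x)\inv$ term should collapse to the desired $xy^2$ after one application of the AIP. Once \eqnref{3.2} is in hand, \eqnref{lem3} should be a corollary via $J$ and \eqnref{aip}, or via a direct one-line translation computation using $P_y = L_{y\inv}\inv L_y$.
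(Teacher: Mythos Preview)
Your proposal is exploration rather than proof: you outline several possible attacks without carrying any of them through. Two concrete issues.

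For \eqnref{3.2}, the paper does not prove this identity here at all---it is simply quoted from \cite[Lem.~3.2]{JKV}. Your plan to rederive it from the identities of \S\secref{prelims} is therefore extra work, and it remains vague: you never identify which substitution into \eqnref{lem2} would make the factors of $L_{y,x}$ line up with those of $P_{xy}$.

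For \eqnref{lem3}, which is what the paper actually proves, there is a genuine missing idea. You correctly guess that \eqnref{lem3} should follow from \eqnref{3.2}, but your trial substitution ($x\mapsto x\inv$ alone) is only half of what is needed, and your fallback---pass through $J$ and \eqnref{aip}---is circular: since $JP_zJ = P_{z\inv} = P_z\inv$, conjugating \eqnref{3.2} by $J$ and using \eqnref{aip} merely returns \eqnref{3.2} with the variables inverted. The paper instead substitutes $x\mapsto x\inv$ \emph{and} $y\mapsto xy$ into \eqnref{3.2}, obtaining $x\,P_{x\inv\cdot xy} = x\inv(xy)^2$. The device you are missing is then the \emph{naturality of $P$ under automorphisms}: for any $\varphi\in\Aut(Q)$ one has $(u\varphi)P_{v\varphi} = (uP_v)\varphi$. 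Taking $\varphi = L_{x,x\inv} = L_xL_{x\inv}$ (an inner mapping, hence an automorphism) gives $x\varphi = x$ and $y\varphi = x\inv\cdot xy$, so the left side becomes $(xP_y)\varphi = x\inv(x\cdot xP_y)$, and cancelling $x\inv$ yields \eqnref{lem3}. Without this naturality step the awkward subscript $x\inv\cdot xy$ cannot be reduced to $y$, and none of the routes you sketch supplies it.
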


\begin{proof}
Equation \eqnref{3.2} is from \cite[Lem 3.2]{JKV}. Replacing $x$ with $x\inv$ and $y$ with $xy$ in \eqnref{3.2} yields $x P_{x\inv \cdot xy} = x\inv (xy)^2$ and $x P_{x\inv \cdot xy} = x L_{x,x\inv} P_{x\inv \cdot xy} = x L_{x,x\inv} P_{y L_{x,x\inv}}$. Now, for every automorphism $\varphi$ of $Q$ we have $x\varphi P_{y\varphi} = (y\varphi)\inv \ld (y\varphi x\varphi) = (y\inv \ld (yx))\varphi = xP_y\varphi$. Thus  $x\inv (xy)^2  = x L_{x,x\inv} P_{y L_{x,x\inv}} = x P_y L_{x,x\inv}$. Canceling $x\inv$ on both sides, we obtain \eqnref{lem3}.
\end{proof}

\section{The associated Bruck loop}
\seclabel{bruck}

A loop $(Q,\circ)$ is said to be a (left) \emph{Bol loop} if it satisfies
the identity
\begin{equation}
\eqnlabel{bol}
(x\circ (y\circ x))\circ z = x\circ (y\circ (x\circ z)) \,.
\end{equation}
A Bol loop is a \emph{Bruck loop} if it also satisfies the
automorphic inverse property $(x\circ y)\inv = x\inv \circ y\inv$.
(Bruck loops are also known as \emph{$K$-loops} or \emph{gyrocommutative gyrogroups}.)

The following construction is the reason for considering Bruck loops in this paper.
Let $(Q,\cdot)$ be a uniquely $2$-divisible commutative automorphic loop.
Define a new operation $\circ$ on $Q$ by
\[
x \circ y = \lbrack x\inv \ld (x y^2)\rbrack^{1/2} = \lbrack (y^2)P_x \rbrack^{1/2}\,.
\]
By \cite[Lem. 3.5]{JKV}, $(Q,\circ)$ is a Bruck loop, and powers in $(Q,\circ)$ coincide with powers in $(Q,\cdot)$.

Since we will work with translations in both $(Q,\cdot)$ and $(Q,\circ)$, we will denote left translations in $(Q,\circ)$ by $L_x^{\circ}$. For instance, we can express the fact that every Bol loop $(Q,\circ)$ is \emph{left power alternative} by
\begin{equation}
\eqnlabel{lalt}
(L_x^{\circ})^n = L_{x^n}^{\circ}
\end{equation}
for all integers $n$.

\begin{proposition}\cite[Thm. 5.10]{kiechle}
\prplabel{bol-nuclei}
Let $(Q,\circ)$ be a Bol loop. Then $N_{\lambda}(Q,\circ) = N_{\mu}(Q,\circ)$.
If, in addition, $(Q,\circ)$ is a Bruck loop, then
$N_{\lambda}(Q,\circ) = Z(Q,\circ)$.
\end{proposition}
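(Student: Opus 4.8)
The plan is to prove the two assertions in turn, working throughout with left translations in $(Q,\circ)$, which I will denote by $L_x$ (dropping the superscript) and writing juxtaposition for $\circ$. As setup I would record the translation form of the Bol identity $L_u L_v L_u = L_{u(vu)}$, together with the standard facts that a left Bol loop is left power alternative, has two-sided inverses satisfying the left inverse property ($L_x\inv = L_{x\inv}$), and that $N_{\lambda}(Q,\circ)$, $N_{\mu}(Q,\circ)$, $N_{\rho}(Q,\circ)$ are subgroups. I would also rephrase nuclear membership via translations: $a\in N_{\lambda}(Q,\circ)$ iff $L_xL_a = L_{ax}$ for all $x$, and $a\in N_{\mu}(Q,\circ)$ iff $L_aL_x = L_{xa}$ for all $x$.

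\emph{Step 1: $N_{\lambda}(Q,\circ)=N_{\mu}(Q,\circ)$.} For $N_{\mu}\subseteq N_{\lambda}$, let $a\in N_{\mu}$, so $L_aL_x=L_{xa}$. Applying the Bol identity with outer translation $L_a$ gives $L_aL_xL_a=L_{a(xa)}$, and rewriting the left side as $(L_aL_x)L_a=L_{xa}L_a$ yields $L_{xa}L_a=L_{a(xa)}$ for all $x$. Since $a\inv\in N_{\mu}$ gives $(ba\inv)a=b$ for all $b$, the substitution $x\mapsto ba\inv$ turns this into $L_bL_a=L_{ab}$ for all $b$, i.e.\ $a\in N_{\lambda}$. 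The reverse inclusion is the mirror image: from $a\in N_{\lambda}$ one gets $L_aL_xL_a=L_a(L_xL_a)=L_aL_{ax}$ and, using the consequence $a(xa)=(ax)a$ of $a\in N_{\lambda}$, also $L_aL_xL_a=L_{(ax)a}$; hence $L_aL_{ax}=L_{(ax)a}$, and substituting $x\mapsto a\inv b$ (legitimate since $a(a\inv b)=b$) gives $L_aL_b=L_{ba}$, i.e.\ $a\in N_{\mu}$.

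\emph{Step 2: in the Bruck case, $N_{\lambda}(Q,\circ)=Z(Q,\circ)$.} The inclusion $Z(Q,\circ)\subseteq N_{\lambda}(Q,\circ)$ is automatic. Conversely take $a\in N_{\lambda}$; by Step 1, $a\in N_{\mu}$ too. First I would show $(ax)\inv=x\inv a\inv$: using $a\inv\in N_{\mu}$ and $a\inv\in N_{\lambda}$ one has $(x\inv a\inv)(ax)=x\inv(a\inv(ax))=x\inv x=1$, so $x\inv a\inv$ is a left inverse, hence the two-sided inverse, of $ax$. Comparing with the automorphic inverse property $(ax)\inv=a\inv x\inv$ gives $a\inv x\inv=x\inv a\inv$ for all $x$; since inversion is a bijection, $a\inv$ — and hence $a$, inversion being an automorphism of the Bruck loop — lies in the commutant $C(Q,\circ)$. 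Finally, with $a\in N_{\lambda}\cap N_{\mu}\cap C$, the chain $xy\cdot a=a\cdot xy=ax\cdot y=xa\cdot y=x\cdot ay=x\cdot ya$ shows $a\in N_{\rho}$, so $a\in N(Q,\circ)\cap C(Q,\circ)=Z(Q,\circ)$, proving $N_{\lambda}(Q,\circ)\subseteq Z(Q,\circ)$.

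The only delicate point is Step 1: it is tempting to invoke flexibility or a right Bol identity, neither of which a general left Bol loop enjoys. The working idea is that conjugating a left translation by $L_a$ keeps it a left translation, and that the substitution $x\mapsto ba\inv$ — available precisely because $a\inv$ is already known to be middle-nuclear — is what converts the conjugated relation into the required intertwining identity. (As this is Kiechle's \cite[Thm. 5.10]{kiechle}, one may of course simply quote it.)
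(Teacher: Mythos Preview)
Your argument is correct. Note, however, that the paper does not actually prove this proposition: it is simply quoted from Kiechle \cite[Thm.~5.10]{kiechle} and used as a black box. So there is no ``paper's own proof'' to compare against; you have supplied a self-contained proof where the authors chose to cite one.

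On the content itself: both steps go through as written. In Step~1 your key move---using the Bol identity $L_aL_xL_a=L_{a(xa)}$ together with the already-established nuclear membership of $a^{-1}$ to make the substitution $x\mapsto ba^{-1}$ (resp.\ $x\mapsto a^{-1}b$) legitimate---is exactly the right idea, and you correctly avoid invoking flexibility or a right Bol identity. In Step~2 the computation $(x^{-1}a^{-1})(ax)=1$ uses $a^{-1}\in N_\mu$ (from Step~1) and the left inverse property, and uniqueness of one-sided inverses in a loop then gives $(ax)^{-1}=x^{-1}a^{-1}$; comparing with the automorphic inverse property forces $a^{-1}\in C(Q,\circ)$, and your final chain shows $a\in N_\rho$. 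The only cosmetic point is that once you have $a^{-1}\in C(Q,\circ)$ for every $a\in N_\lambda$, you can conclude $a\in C(Q,\circ)$ simply because $a^{-1}\in N_\lambda$ too (nuclei are groups), without needing to invoke that inversion is an automorphism.
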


In the uniquely $2$-divisible case, we can say more about the center.

\begin{lemma}
\lemlabel{bruck-center}
Let $(Q,\circ)$ be a uniquely $2$-divisible Bol loop. Then
$Z(Q,\circ) = C(Q,\circ) \cap N_{\rho}(Q,\circ)$.
\end{lemma}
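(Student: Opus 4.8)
The inclusion $Z(Q,\circ)\le C(Q,\circ)\cap N_{\rho}(Q,\circ)$ holds in every loop, since by definition $Z=N\cap C\le N_{\rho}\cap C$, so only the reverse inclusion needs an argument. The plan is to first reduce it using Proposition \prpref{bol-nuclei}: in a Bol loop $N_{\lambda}(Q,\circ)=N_{\mu}(Q,\circ)$, hence $Z(Q,\circ)=N(Q,\circ)\cap C(Q,\circ)=N_{\lambda}(Q,\circ)\cap N_{\rho}(Q,\circ)\cap C(Q,\circ)$. It therefore suffices to show $C(Q,\circ)\cap N_{\rho}(Q,\circ)\subseteq N_{\lambda}(Q,\circ)$.

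To prove this, fix $a\in C(Q,\circ)\cap N_{\rho}(Q,\circ)$. By Lemma \lemref{nucr-comm}, $L_a^{\circ}L_x^{\circ}=L_x^{\circ}L_a^{\circ}$ for all $x\in Q$. The key idea is to specialize the left Bol identity \eqnref{bol}, which in translation form reads $L_x^{\circ}L_y^{\circ}L_x^{\circ}=L_{x\circ(y\circ x)}^{\circ}$, by setting $y=a$. Using that $L_a^{\circ}$ commutes with $L_x^{\circ}$ and the left power alternative law \eqnref{lalt},
\[
L_{x\circ(a\circ x)}^{\circ}\;=\;L_x^{\circ}L_a^{\circ}L_x^{\circ}\;=\;L_a^{\circ}(L_x^{\circ})^{2}\;=\;L_a^{\circ}L_{x^{2}}^{\circ},
\]
where $x^{2}=x\circ x$. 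On the subscript side, since $a\in C(Q,\circ)$ we have $a\circ x=x\circ a$, and since $a\in N_{\rho}(Q,\circ)$ we have $x\circ(x\circ a)=(x\circ x)\circ a$; hence $x\circ(a\circ x)=x^{2}\circ a$. Thus $L_{x^{2}\circ a}^{\circ}=L_a^{\circ}L_{x^{2}}^{\circ}$ for all $x\in Q$.

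The final step is to use unique $2$-divisibility: the squaring map $x\mapsto x^{2}$ is a bijection of $Q$, so the displayed identity upgrades to $L_{z\circ a}^{\circ}=L_a^{\circ}L_z^{\circ}$ for \emph{all} $z\in Q$. Rewriting $z\circ a=a\circ z$ via $a\in C(Q,\circ)$ and commuting $L_a^{\circ}$ with $L_z^{\circ}$ once more yields $L_{a\circ z}^{\circ}=L_z^{\circ}L_a^{\circ}$ for all $z$, which is precisely $a\in N_{\lambda}(Q,\circ)$, completing the proof. I do not expect a serious obstacle here; the one point that must be handled with care is the specialization of the Bol identity at its middle variable and the subsequent rewriting of the subscript $x\circ(a\circ x)$ as $x^{2}\circ a$ using \emph{both} hypotheses on $a$ — keeping the parentheses honest, since a left Bol loop need not be flexible. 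Unique $2$-divisibility enters exactly once, to pass from ``all $\circ$-squares'' to ``all elements''.
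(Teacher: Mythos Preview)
Your argument is correct and is essentially the paper's proof rewritten in translation form: the paper computes $(x^{2}\circ a)\circ y=x^{2}\circ(a\circ y)$ elementwise via the same chain (left power alternative, $a\in C$, Bol identity, Lemma~\lemref{nucr-comm}, left power alternative), then invokes unique $2$-divisibility and Proposition~\prpref{bol-nuclei}. The only cosmetic difference is that the paper stops at $a\in N_{\mu}$ and cites $N_{\mu}=N_{\lambda}$, whereas you perform one extra commutation to land directly in $N_{\lambda}$; in fact your intermediate identity $L^{\circ}_{z\circ a}=L^{\circ}_{a}L^{\circ}_{z}$ already says $a\in N_{\mu}$, so that final step is optional.
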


\begin{proof}
One inclusion is obvious. For the other, suppose
$a\in C(Q,\circ) \cap N_{\rho}(Q,\circ)$. Then for all $x,y\in Q$,
\begin{alignat*}{2}
(x^2 \circ a)\circ y &\by{lalt} (x\circ (x\circ a))\circ y
&&= (x\circ (a\circ x))\circ y \\
&\by{bol} x\circ (a\circ (x\circ y))
&&= x\circ (x\circ (a\circ y)) \\
&\by{lalt} x^2 \circ (a\circ y)\,, &&
\end{alignat*}
where we used $a\in C(Q,\circ)$ in the second equality and
Lemma \lemref{nucr-comm} in the fourth. Since squaring is a permutation,
we may replace $x^2$ with $x$ to get $(x\circ a)\circ y = x\circ (a\circ y)$ for all
$x,y\in Q$. Thus $a\in N_{\mu}(Q,\circ) = N_{\lambda}(Q,\circ)$ (Proposition \prpref{bol-nuclei}),
and so $a\in Z(Q,\circ)$.
\end{proof}

\begin{lemma}
\lemlabel{center-char}
Let $(Q,\cdot)$ be a uniquely $2$-divisible commutative automorphic loop with
associated Bruck loop $(Q,\circ)$. Then
$a\in Z(Q,\circ)$ if and only if, for all $x\in Q$,
\begin{equation}
\eqnlabel{center-char}
P_a P_x = P_x P_a\,.
\end{equation}
\end{lemma}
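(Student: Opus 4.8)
The plan is to reduce the statement to Lemmas \lemref{nucr-comm} and \lemref{bruck-center} by expressing each left translation of $(Q,\circ)$ as a conjugate of the corresponding permutation $P_x$ by the squaring map. The point is that, in right-action notation, the formula $x\circ y = [(y^2)P_x]^{1/2}$ says precisely that $L_x^{\circ} = S\,P_x\,S\inv$, where $S\colon Q\to Q$ is the squaring permutation $xS = x\cdot x$; once this is in hand, the commutation $L_a^{\circ}L_x^{\circ} = L_x^{\circ}L_a^{\circ}$ becomes, after conjugating back by $S$, exactly $P_aP_x = P_xP_a$.

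In more detail, I would first observe that since powers in $(Q,\circ)$ coincide with powers in $(Q,\cdot)$, the squaring map $S$ is the \emph{same} map in both loops, and it is a permutation by unique $2$-divisibility of $(Q,\cdot)$; hence $(Q,\circ)$ is itself a uniquely $2$-divisible Bruck loop. Thus Lemma \lemref{bruck-center} applies and gives $Z(Q,\circ) = C(Q,\circ)\cap N_{\rho}(Q,\circ)$, and Lemma \lemref{nucr-comm}, applied to the loop $(Q,\circ)$, shows that this set consists exactly of those $a$ for which $L_a^{\circ}L_x^{\circ} = L_x^{\circ}L_a^{\circ}$ for all $x\in Q$. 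Next I would verify the key identity: from $x\circ y = [(y^2)P_x]^{1/2}$ one reads off $y L_x^{\circ} = ((yS)P_x)S\inv$, i.e. $L_x^{\circ} = S\,P_x\,S\inv$. Consequently $L_a^{\circ}L_x^{\circ} = S\,P_aP_x\,S\inv$ and $L_x^{\circ}L_a^{\circ} = S\,P_xP_a\,S\inv$, and since $S$ is a bijection these two permutations agree if and only if $P_aP_x = P_xP_a$. Chaining the two equivalences yields the lemma.

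I do not expect a genuine obstacle here: the argument is a short chain of reductions. The only places that call for a moment's care are (i) confirming that $(Q,\circ)$ inherits unique $2$-divisibility from $(Q,\cdot)$, so that Lemma \lemref{bruck-center} is legitimately invoked, and (ii) getting the order of composition right when translating the defining formula for $\circ$ into the identity $L_x^{\circ} = S\,P_x\,S\inv$ in the paper's right-action convention. Everything beyond that is formal manipulation of permutations.
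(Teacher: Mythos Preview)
Your proof is correct and follows essentially the same route as the paper's: both invoke Lemmas \lemref{nucr-comm} and \lemref{bruck-center} to reduce $a\in Z(Q,\circ)$ to the commutation $L_a^{\circ}L_x^{\circ}=L_x^{\circ}L_a^{\circ}$, and then translate this into $P_aP_x=P_xP_a$ via the defining formula for $\circ$. The only cosmetic difference is that you package the translation as conjugation by the squaring permutation $S$, whereas the paper writes out $a\circ(x\circ y)=x\circ(a\circ y)$ elementwise and cancels the square and square-root directly; these are the same computation.
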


\begin{proof}
By Lemmas \lemref{nucr-comm} and \lemref{bruck-center}, $a\in Z(Q,\circ)$ if and only if
the identity $a\circ (x\circ y) = x\circ (a\circ y)$ holds for all $x,y\in Q$. This
can be written as $\lbrack (y^2) P_x P_a \rbrack^{1/2} = \lbrack (y^2) P_a P_x \rbrack^{1/2}$.
Squaring both sides and using unique $2$-divisibility to replace $y^2$ with $y$, we have
$(y) P_x P_a = (y) P_a P_x$ for all $x,y\in Q$.
\end{proof}

\section{Proofs of the Main Results}
\seclabel{proofs}

Throughout this section, let $(Q,\cdot)$ be a uniquely $2$-divisible, commutative automorphic
loop with associated Bruck loop $(Q,\circ)$.

\begin{lemma}
\lemlabel{483}
If $a\in Z(Q,\circ)$, then for all $x\in Q$,
\begin{equation}
\eqnlabel{483}
x L_{a\ld x,a} = x L_{a\ld x\inv,a}\,.
\end{equation}
\end{lemma}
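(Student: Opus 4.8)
The plan is to translate the claimed identity \eqnref{483} into a statement purely about the permutations $P_a$ and $P_x$, which by Lemma \lemref{center-char} is where the hypothesis $a\in Z(Q,\circ)$ lives. So first I would try to rewrite both sides of \eqnref{483} in terms of $P$'s. The natural bridge is \eqnref{lem2}, which says $y L_{y,x} = ((xy)\ld x)\inv$, i.e.\ it expresses the ``diagonal'' inner-mapping value $yL_{y,x}$ via a left division; combined with the definition $yD_x = y\ld x$ and with $P_x = L_{x\inv}\inv L_x$, one should be able to express $xL_{a\ld x,a}$ as the image of $x$ under a word in $L$'s and $D$'s built from $a$ and $a\ld x$. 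The key observation will be that $a\ld x = xD_a$, so the subscript $a\ld x$ is not independent of $x$; this is what makes the identity plausible, since replacing $x$ by $x\inv$ on the left-hand side changes \emph{both} the argument and the first subscript in a controlled (inversion-symmetric) way.

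Next I would exploit the automorphic inverse property. By \eqnref{aip}, $(a\ld x)\inv = a\inv\ld x\inv$, so $a\ld x\inv$ on the right-hand side of \eqnref{483} is $((a\inv)\ld(x\inv))\inv$ reindexed; more usefully, applying $J$ and using \eqnref{laip} ($L_{x\inv,y\inv}=L_{x,y}$) lets one move inversions past the inner-mapping subscripts. The strategy is: apply the statement's two sides to the inversion map, or equivalently conjugate by $J$, using \eqnref{laip}, \eqnref{aip}, and \eqnref{lem1.5}/\eqnref{Ds} to show that \eqnref{483} is \emph{equivalent} to an identity of the form $x\,P_{\text{(something)}} = x\,P_{\text{(something else)}}$ where the two ``somethings'' are $a\ld x$ and $a\ld x\inv$ — and then show that equality of those $P$-images follows from $P_aP_x=P_xP_a$. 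Concretely I expect to use \eqnref{3.2}, $(x\inv)P_{xy}=xy^2$, and \eqnref{lem3}, $x\cdot xP_y=(xy)^2$, to identify $P_{a\ld x}$ and $P_{a\ld x\inv}$ acting on the relevant element in terms of $P_a$, $P_x$, the squaring map, and $J$; the commuting relation \eqnref{center-char} is then exactly what is needed to interchange the two.

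The main obstacle, I expect, is the bookkeeping of inverses and of the non-associative division $a\ld x$: the subscripts $a\ld x$ and $a\ld x\inv$ are \emph{not} inverses of one another (that would require $(a\ld x)\inv = a\ld x\inv$, which is false in general — \eqnref{aip} gives $(a\ld x)\inv=a\inv\ld x\inv$ instead), so one cannot simply apply $J$ and appeal to \eqnref{laip} to collapse the right-hand side onto the left. The real content will be showing that the discrepancy between $a\ld x$ and $a\inv\ld x\inv$ is absorbed by the fact that $a$ is central in $(Q,\circ)$, i.e.\ that $P_a$ commutes with everything in the $P$-picture; I anticipate needing \eqnref{Ds} ($D_{x^2}=D_xJD_x$) together with unique $2$-divisibility to pass between $x$ and $x^2$ when matching the $P$-formulas of Lemma \lemref{lem3} to the $L$-and-$D$ expansion of $L_{a\ld x,a}$. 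A secondary, more mechanical difficulty is simply expanding $L_{a\ld x,a}=L_{a\ld x}L_aL_{a(a\ld x)}\inv = L_{a\ld x}L_aL_x\inv$ and recognizing the resulting word as conjugate (via $L$'s and $J$) to a $P$-word; once that normal form is in hand, the hypothesis \eqnref{center-char} should finish the argument by a direct substitution.
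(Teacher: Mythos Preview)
Your outline is in the right spirit---reduce to a $P$-statement and apply \eqnref{center-char}---and your list of tools (\eqnref{laip}, \eqnref{aip}, \eqnref{lem1.5}, \eqnref{Ds}, unique $2$-divisibility) matches the paper's. But the concrete route you sketch, computing $xL_{a\ld x,a}$ directly via \eqnref{lem2} and then massaging with \eqnref{3.2}/\eqnref{lem3}, is not what the paper does and it is not clear it closes. Indeed, using \eqnref{lem1}, \eqnref{lem2}, and $x=a\cdot(a\ld x)$ together with the automorphism property gives $xL_{a\ld x,a}=a\cdot(x\ld a)\inv$ and $xL_{a\ld x\inv,a}=a\inv\cdot(x\inv\ld a)$, so \eqnref{483} becomes $a\cdot(u\ld a\inv)=a\inv\cdot(u\ld a)$ for all $u$; this does not drop out of $P_aP_u=P_uP_a$ by inspection, and \eqnref{3.2}/\eqnref{lem3} do not obviously bridge the gap.

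The paper's key idea---which your remark about ``passing between $x$ and $x^2$'' nearly anticipates---is to evaluate at $x^{-2}$ rather than at $x$. From \eqnref{Ds} and \eqnref{aip} one obtains $x^{-2}=(x\inv)L_{a\ld x}\inv L_{a\inv}$; applying $L_{a\ld x,a}=L_{a\ld x}L_aL_x\inv$, commuting it past $L_{a\inv}$ via \eqnref{lem1.5}, and cancelling $L_{a\ld x}\inv L_{a\ld x}$ yields the clean form $(x^{-2})L_{a\ld x,a}=aP_{x\inv}L_{a\inv}$. Now the hypothesis enters exactly once: $aP_{x\inv}L_{a\inv}=aP_{x\inv}P_{a\inv}L_a=aP_{a\inv}P_{x\inv}L_a=a\inv P_{x\inv}L_a$, and this last expression is, by the \emph{same} computation with $a$ replaced by $a\inv$, equal to $(x^{-2})L_{a\inv\ld x,\,a\inv}$, which by \eqnref{aip} and \eqnref{laip} is $(x^{-2})L_{a\ld x\inv,\,a}$. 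Since inner mappings are automorphisms, $(x^{-2})\varphi=(x\varphi)^{-2}$, and unique $2$-divisibility cancels the exponent. So the missing concrete step in your plan is that the tractable object is the image of $a$ (not of $x$) under a short $P$-word, reached by first feeding $x^{-2}$ into $L_{a\ld x,a}$; the identities \eqnref{lem2}, \eqnref{3.2}, \eqnref{lem3} play no role in this lemma.
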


\begin{proof}
First,
\begin{alignat*}{2}
x^{-2} &= x^{-2} L_{a\inv}\inv L_{a\inv}
&&= a\inv D_{x^{-2}} L_{a\inv} \\
&\by{aip} a D_{x^2} J L_{a\inv}
&&\by{Ds} a D_x J D_x J L_{a\inv} \\
&\by{aip} a D_x D_{x\inv} L_{a\inv}
&&= (x\inv) L_{a\ld x}\inv L_{a\inv}\,.
\end{alignat*}
Thus we compute
\begin{alignat}{2}
(x^{-2}) L_{a\ld x,a}
&= (x\inv) L_{a\ld x}\inv L_{a\inv} L_{a\ld x,a}
&&\by{lem1.5} (x\inv) L_{a\ld x}\inv L_{a\ld x,a} L_{a\inv} \notag\\
&= (x\inv) L_a L_x\inv L_{a\inv}
&&= a L_{x\inv} L_x\inv L_{a\inv} \eqnlabel{483tmp}\\
&= a P_{x\inv} L_{a\inv}\,. &&\notag
\end{alignat}
Since $a\inv \in Z(Q,\circ)$, we may also apply \eqnref{483tmp}
with $a\inv$ in place of $a$, and will do so in the next calculation. Now
\begin{alignat*}{2}
a P_{x\inv} L_{a\inv} &= a P_{x\inv} P_{a\inv} L_a
&&\by{center-char} a P_{a\inv} P_{x\inv} L_a \\
&= a\inv P_{x\inv} L_a
&&\by{483tmp} (x^{-2}) L_{a\inv \ld x,a\inv} \\
&\by{aip} (x^{-2}) L_{(a\ld x\inv)\inv,a\inv}
&&\by{laip} (x^{-2}) L_{a\ld x\inv,a}\,,
\end{alignat*}
where we used $a\inv \in Z(Q,\circ)$ in the second equality.

Putting this together with \eqnref{483tmp}, we have
$(x^{-2}) L_{a\ld x,a} = (x^{-2}) L_{a\ld x\inv,a}$ for all $x\in Q$.
Since inner mappings are automorphisms, this implies
$(x L_{a\ld x,a})^{-2} = (x L_{a\ld x\inv,a})^{-2}$. Taking
inverses and square roots, we have the desired result.
\end{proof}

\begin{lemma}
\lemlabel{489}
If $a\in Z(Q,\circ)$, then for all $x\in Q$,
\begin{align}
    (a\ld x) L_{a\ld x\inv,a} &= (x\ld a)\inv,\eqnlabel{489}\\
    x\inv \cdot xP_a &= a^2.\eqnlabel{ugh}
\end{align}
\end{lemma}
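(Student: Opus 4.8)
The plan is to establish \eqnref{489} and \eqnref{ugh} in tandem, extracting them from the machinery already set up in Lemma \lemref{483} together with the characterizations \eqnref{lem2} and \eqnref{3.2}. First I would tackle \eqnref{489}. By \eqnref{lem2} applied with the pair $(a\ld x\inv, a)$ in the role of $(y,x)$ — noting that $y L_{y,x} = ((xy)\ld x)\inv$ with $y = a\ld x\inv$ and $x = a$, so that $xy = a\cdot(a\ld x\inv) = x\inv$ — one gets $(a\ld x\inv) L_{a\ld x\inv,a} = (x\inv\ld a)\inv = (x\ld a\inv)\inv$ after applying the automorphic inverse property \eqnref{aip} to $x\inv\ld a$. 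So the issue is that the left-hand side of \eqnref{489} has argument $a\ld x$, not $a\ld x\inv$, inside the translation $L_{a\ld x\inv,a}$. This is exactly the kind of mismatch Lemma \lemref{483} was designed to repair: \eqnref{483} equates $xL_{a\ld x,a}$ with $xL_{a\ld x\inv,a}$. I expect that a short substitution — likely replacing $x$ by $x\inv$ in \eqnref{483} and juggling \eqnref{laip}, which says $L_{x\inv,y\inv} = L_{x,y}$ — will convert $(a\ld x) L_{a\ld x\inv,a}$ into $(a\ld x\inv) L_{a\ld x,a}$ or into an expression to which \eqnref{lem2} directly applies, after which \eqnref{aip} cleans up the inverses to yield $(x\ld a)\inv$.

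For \eqnref{ugh}, the natural route is through \eqnref{3.2}, which reads $(x\inv)P_{xy} = xy^2$. Setting $y = a\ld x\inv$ there gives $x\cdot a = xy\cdot$(nonsense)—so more carefully, I would look for the substitution making $xy$ equal to $a$: that is $y = x\ld a$, giving $(x\inv)P_a = x\cdot(x\ld a)^2$. That is not yet \eqnref{ugh}. Alternatively, \eqnref{ugh} rearranges to $xP_a = x\cdot a^2$ (using commutativity and that $x\inv\cdot(x\cdot a^2) = a^2$ would need $x\inv$ to associate, which it need not), so this naive reading is wrong; instead \eqnref{ugh} genuinely asserts $x\inv\cdot xP_a = a^2$, a statement about the product of $x\inv$ with $xP_a$. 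I would try to derive it from \eqnref{489}: unravel $P_a = L_a L_{a\inv}\inv$, so $xP_a = (a\ld(xa))\cdot$—wait, $P_a = L_a L_{a^{-1}}^{-1}$ means $xP_a = (x\cdot a)$ then apply $L_{a^{-1}}^{-1}$, i.e. $xP_a = a^{-1}\ld (xa)$. Then $x^{-1}\cdot xP_a = x^{-1}\cdot(a^{-1}\ld(xa))$, and I would aim to recognize this via \eqnref{489} and \lemref{lem3}'s identity \eqnref{lem3}, namely $x\cdot xP_y = (xy)^2$, which with suitable arguments controls exactly such products.

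The step I expect to be the main obstacle is the bookkeeping of inverses and left-divisions in deriving \eqnref{489}: keeping straight whether a given inner mapping carries index $a\ld x$ or $a\ld x\inv$, and correctly invoking \eqnref{483} (which requires $a\in Z(Q,\circ)$, already our hypothesis) at the precise moment the indices would otherwise fail to match. Once \eqnref{489} is in hand, I anticipate \eqnref{ugh} follows by a comparatively mechanical manipulation: apply $L_{a\inv}$ or a division permutation to both sides of \eqnref{489}, use \eqnref{aip} to move inverses past left-division, and recognize the resulting product through \eqnref{lem3} or directly through the definition of $P_a$. The fact that $a\inv\in Z(Q,\circ)$ as well (used already in Lemma \lemref{483}) may also be needed to symmetrize the argument.
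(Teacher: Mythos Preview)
Your plan for \eqnref{489} has a genuine gap. You compute $(a\ld x\inv)L_{a\ld x\inv,a} = (x\inv\ld a)\inv$ via \eqnref{lem2}, and then hope \eqnref{483} will ``convert $(a\ld x)L_{a\ld x\inv,a}$ into $(a\ld x\inv)L_{a\ld x,a}$.'' But \eqnref{483} does not do this. It says that \emph{for the specific element $x$} the two inner mappings $L_{a\ld x,a}$ and $L_{a\ld x\inv,a}$ agree; it changes the index, not the argument being acted on. In fact the two expressions you hope to equate are generally different: the first equals $(x\ld a)\inv$ (this is what \eqnref{489} asserts), while the second, by your own computation combined with \eqnref{483} at $x\inv$, equals $(x\inv\ld a)\inv$. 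Replacing $x$ by $x\inv$ in \eqnref{483} and juggling \eqnref{laip} will not bridge this, because neither identity moves the $a\ld$ between the argument and the subscript.

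The missing idea is that $L_{a\ld x\inv,a}\in\Aut Q$ and, by \eqnref{lem1}, it fixes $a$; hence it commutes with left-division by $a$, giving $(a\ld x)L_{a\ld x\inv,a} = a\ld\bigl(xL_{a\ld x\inv,a}\bigr)$. Now \eqnref{483} applies directly to the inner expression, yielding $a\ld\bigl(xL_{a\ld x,a}\bigr) = (a\ld x)L_{a\ld x,a}$ (using the automorphism property and \eqnref{lem1} once more), and \eqnref{lem2} with $(y,x)=(a\ld x,a)$ finishes. This is exactly the paper's route.

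For \eqnref{ugh} your sketch is too loose to evaluate; invoking \eqnref{lem3} does not obviously help, since \eqnref{lem3} controls $x\cdot xP_y$, not $x\inv\cdot xP_a$. The paper rewrites $x\inv\cdot xP_a = (x\inv)L_{a\inv\ld(ax)}$, inserts the identity $L_{a\inv}L_{ax}\inv L_{ax}L_{a\inv}\inv$ to manufacture the inner mapping $L_{a\inv\ld(ax),\,a\inv}$, rewrites $x\inv = a\inv\ld(ax)\inv$ via \eqnref{aip}, and then applies \eqnref{489} with $a\inv$ in place of $a$ and $(ax)\inv$ in place of $x$ to collapse everything to $aL_{a\inv}\inv = a^2$. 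Your instinct that \eqnref{489} is the key is correct, but the substitution that makes it fire is specific and you have not located it.
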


\begin{proof}
We compute
\[
(a\ld x) L_{a\ld x\inv,a} = a\ld (x L_{a\ld x\inv,a})
\by{483} a \ld (x L_{a\ld x,a})
\by{lem1} (a\ld x) L_{a\ld x,a}
\by{lem2} (x\ld a)\inv \,,
\]
where we used $L_{a\ld x\inv,a}\in \Aut Q$ in the first equality and $L_{a\ld x,a}\in \Aut Q$ in the third equality.

To show \eqnref{ugh}, we compute
\begin{alignat*}{2}
x\inv \cdot xP_a &= (x\inv)L_{a\inv \ld (ax)}
&&= (x\inv)L_{a\inv \ld (ax)} L_{a\inv} L_{ax}\inv L_{ax} L_{a\inv}\inv \\
&= (a\ld (ax))\inv L_{a\inv \ld (ax),a\inv} L_{ax} L_{a\inv}\inv
&&\by{aip}  (a\inv\ld (ax)\inv) L_{a\inv \ld (ax),a\inv} L_{ax} L_{a\inv}\inv \\
&\by{489} ( (ax)\inv \ld a\inv)\inv L_{ax} L_{a\inv}\inv
&&\by{aip} ((ax) \ld a ) L_{ax} L_{a\inv}\inv \\
&= a L_{a\inv}\inv &&= a^2\,.
\end{alignat*}
Note that in the fifth equality, we are applying \eqnref{489} with
$a\inv$ in place of $a$ and $(ax)\inv$ in place of $x$.
\end{proof}

\begin{lemma}
\lemlabel{leftmult}
If $a\in Z(Q,\circ)$, then $L_a = L_a^{\circ}$, and for all integers $n$
\begin{equation}
\eqnlabel{poweralt}
L_a^n = L_{a^n}.
\end{equation}
\end{lemma}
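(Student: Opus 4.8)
The plan is to prove $L_a = L_a^\circ$ first, and then deduce the power-alternative property \eqnref{poweralt} from the left power alternative law \eqnref{lalt} in the Bruck loop. For the first claim, fix $a \in Z(Q,\circ)$ and an arbitrary $y \in Q$. I want to show $ay = y L_a^\circ = a \circ y$. Using the definition of $\circ$, we have $a\circ y = \lbrack (y^2) P_a \rbrack^{1/2}$, so squaring reduces the goal to showing $(ay)^2 = (y^2) P_a = y\inv \ld (ay^2)$. Equivalently, cancelling the left division, the goal becomes $y\inv \cdot (ay)^2 = a y^2$. Now \eqnref{lem3} of Lemma \lemref{lem3} says precisely $y \cdot y P_a = (ay)^2$; but I need the version with $y\inv$ on the outside, which is \eqnref{ugh} of Lemma \lemref{489} read with $x$ replaced by a suitable element. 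Indeed \eqnref{ugh} states $x\inv \cdot x P_a = a^2$ for all $x$, and combined with \eqnref{3.2} (which gives $(x\inv)P_{xy} = xy^2$, or after relabelling an expression for $xP_a$ in terms of products) one massages this into $y\inv \cdot (ay)^2 = ay^2$. So the first step is to chase \eqnref{ugh}, \eqnref{3.2}, \eqnref{lem3} and the definition of $\circ$ to get $a\circ y = ay$ for all $y$; since squaring is a permutation this is a genuine equality, giving $L_a^\circ = L_a$.

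Once $L_a = L_a^\circ$ is established, the second claim is nearly immediate. The associated Bruck loop has the same powers as $(Q,\cdot)$ (stated in \S\secref{bruck}), so $a^n$ is unambiguous, and by \eqnref{lalt} we have $(L_a^\circ)^n = L_{a^n}^\circ$. Since $a \in Z(Q,\circ)$ is a subloop (the center is normal, in particular a subloop), every power $a^n$ also lies in $Z(Q,\circ)$, so $L_{a^n}^\circ = L_{a^n}$ by the first part applied to $a^n$. Therefore $L_a^n = (L_a^\circ)^n = L_{a^n}^\circ = L_{a^n}$, which is \eqnref{poweralt}.

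I expect the main obstacle to be the first step: correctly identifying which of the accumulated identities \eqnref{3.2}, \eqnref{lem3}, \eqnref{ugh} — together with the automorphic inverse property \eqnref{aip} and the definition of $P_a$ — combine to yield $y\inv \cdot (ay)^2 = ay^2$ without circularity, and in particular checking that \eqnref{ugh} is being applied with the right substitution. It is essentially a bookkeeping exercise in translations, but the substitutions ($x \leftrightarrow$ various products involving $a$) must be tracked carefully. Everything after that is formal, relying only on \eqnref{lalt} and the fact that the center is a subloop containing all powers of its elements.
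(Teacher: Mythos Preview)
Your second paragraph (deducing \eqnref{poweralt} from $L_a = L_a^\circ$ via \eqnref{lalt} and $a^n\in Z(Q,\circ)$) is correct and matches the paper; you even spell out the use of $a^n\in Z(Q,\circ)$, which the paper leaves implicit.

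The first paragraph, however, has a concrete error and a missing idea. The error: by definition $P_a = L_a L_{a\inv}\inv$, so $(y^2)P_a = a\inv\ld(a\cdot y^2)$, not $y\inv\ld(ay^2)$. Consequently the ``equivalent'' target $y\inv\cdot(ay)^2 = ay^2$ is not the right reformulation of $(ay)^2=(y^2)P_a$; the correct reformulation is $a\inv\cdot(ay)^2 = a\cdot y^2$. Your subsequent ``one massages this into \dots'' is aimed at the wrong identity, and the sketch does not make clear how \eqnref{ugh}, \eqnref{3.2}, \eqnref{lem3} would combine even for the correct one.

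The missing idea is the use of $a\in C(Q,\circ)$ to switch the roles of $a$ and $x$ \emph{before} unpacking the definition of $\circ$. The paper computes
\[
(a\circ x)^2 = (x\circ a)^2 = (a^2)P_x,
\]
so that $a^2$ is the argument and $x$ sits in the subscript. Now \eqnref{ugh} gives $a^2 = x\inv\cdot xP_a$, and since $P_x = L_{x\inv}\inv L_x$ one gets immediately
\[
(x\inv\cdot xP_a)P_x = (xP_a)L_x = x\cdot xP_a \by{lem3} (ax)^2.
\]
Taking square roots yields $a\circ x = ax$. Without first commuting in $(Q,\circ)$ you are left trying to prove $y\cdot yP_a = (y^2)P_a$, for which your listed ingredients do not obviously suffice.
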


\begin{proof}
For $x\in Q$, we compute
\[
(a\circ x)^2 = (x\circ a)^2 = (a^2) P_x
\by{ugh} x P_a L_{x\inv} P_x = x\cdot xP_a
\by{lem3} (ax)^2\,.
\]
Taking square roots, we have $a\circ x = ax$, as desired. Then
$L_a^n = (L_a^{\circ})^n \by{lalt} L_{a^n}^{\circ} = L_{a^n}$.
\end{proof}

\begin{lemma}
\lemlabel{Ps}
If $a\in Z(Q,\circ)$, then for all $x\in Q$,
\begin{equation}
\eqnlabel{Ps}
P_{xa} = P_x P_a \,.
\end{equation}
\end{lemma}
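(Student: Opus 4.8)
The plan is to transfer the identity \eqnref{Ps} to the associated Bruck loop, where it becomes a statement about a central element. The bridge is the observation that the defining formula $x\circ y = \lbrack (y^2)P_x\rbrack^{1/2}$ says exactly $(x\circ y)^2 = (y^2)P_x$; writing $S$ for the squaring permutation $y\mapsto y^2$, which is a bijection by unique $2$-divisibility, this reads $L_x^{\circ}S = SP_x$, i.e.
\[
P_x = S\inv L_x^{\circ} S
\]
for every $x\in Q$. Consequently $P_{xa} = P_x P_a$ holds if and only if $S\inv L_{xa}^{\circ}S = S\inv L_x^{\circ}L_a^{\circ}S$, that is, if and only if $L_{xa}^{\circ} = L_x^{\circ}L_a^{\circ}$, and it suffices to prove this last identity.

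For this I would proceed in two steps. First, $xa = ax = xL_a = xL_a^{\circ} = a\circ x$, using commutativity of $\cdot$ and Lemma \lemref{leftmult}. Second, since $a\in Z(Q,\circ)\subseteq N_{\lambda}(Q,\circ)$, we have $(a\circ x)\circ y = a\circ(x\circ y)$ for all $y\in Q$, which in translation form is precisely $L_{a\circ x}^{\circ} = L_x^{\circ}L_a^{\circ}$. Combining the two steps gives $L_{xa}^{\circ} = L_{a\circ x}^{\circ} = L_x^{\circ}L_a^{\circ}$, and conjugating back by $S$ yields \eqnref{Ps}.

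I do not expect a serious obstacle here: the only point requiring care is to recognize that $P_x$ is nothing but $L_x^{\circ}$ conjugated by the squaring map, after which the statement collapses to the elementary fact that a central element of the associated Bruck loop lies in its left nucleus. If one prefers to avoid naming the operator $S$, the same argument can be run pointwise: evaluate both sides of \eqnref{Ps} at an arbitrary element of the form $z^2$ (squaring being onto), rewrite $(z^2)P_w = (w\circ z)^2$ at each occurrence of a $P$, and use centrality of $a$ in $(Q,\circ)$ together with $xa = a\circ x$ to move $a$ across the $\circ$-product.
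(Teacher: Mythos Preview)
Your proof is correct and is essentially the same as the paper's: both use Lemma \lemref{leftmult} to identify $xa = a\circ x$, then centrality of $a$ in $(Q,\circ)$ to associate, with the definition of $\circ$ translating between $P$-maps and $\circ$-translations. The paper runs precisely the pointwise version you sketch at the end (evaluating at $y = (y^{1/2})^2$ and rewriting each $P$ via $(w\circ z)^2 = (z^2)P_w$), while your main presentation packages the same steps as the conjugation $P_x = S\inv L_x^{\circ} S$.
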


\begin{proof}
For each $y\in Q$,
\[
y P_{xa} = y P_{ax} = \lbrack ax\circ y^{1/2} \rbrack^2
= \lbrack (a\circ x)\circ y^{1/2} \rbrack^2
= \lbrack a\circ (x\circ y^{1/2}) \rbrack^2
= y P_x P_a\,,
\]
using Lemma \lemref{leftmult} in the third equality and $a\in Z(Q,\circ)$ in the
fourth.
\end{proof}

\begin{lemma}
\lemlabel{close}
If $a\in Z(Q,\circ)$, then $a^2 \in Z(Q,\cdot)$.
\end{lemma}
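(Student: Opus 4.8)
The plan is to show that $a^2$ lies in both $C(Q,\cdot)$ and $N_\lambda(Q,\cdot)$; by Proposition \prpref{aut-nuc} these together give $a^2\in Z(Q,\cdot)$. Commutativity of $a^2$ is automatic since $(Q,\cdot)$ is commutative, so the real content is $a^2\in N_\lambda(Q,\cdot)$, which by Lemma \lemref{nucr-comm} (using commutativity again, so that $N_\lambda = N_\rho \cap C$ in the commutative setting, or more directly the characterization $L_{a^2}L_x = L_x L_{a^2}$ for all $x$) amounts to proving
\[
L_{a^2} L_x = L_x L_{a^2}
\]
for every $x\in Q$. So the whole problem reduces to establishing that $L_{a^2}$ is central in the group generated by the left translations.

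To do this I would exploit the machinery already assembled. From Lemma \lemref{leftmult} we have $L_a = L_a^\circ$ and the power-alternative law $L_{a^2} = L_a^2$. The natural route is then to transfer the computation into the Bruck loop $(Q,\circ)$: since $a\in Z(Q,\circ)$, the translation $L_a^\circ$ commutes with every $L_x^\circ$, hence $L_a$ commutes with every $L_x^\circ$. The difficulty is that $L_x^\circ$ is \emph{not} $L_x$, so commuting with $L_a$ in the $\circ$-world does not directly give commuting with $L_a$ in the $\cdot$-world. The bridge is the relation $P_{xa} = P_x P_a$ from Lemma \lemref{Ps}: recalling that $P_x = L_x L_{x\inv}\inv$, and that in a commutative automorphic loop $x\circ y$ is recovered from $P_x$, one can hope to convert the statement "$P_a$ commutes with all $P_x$" (Lemma \lemref{center-char}) together with "$P_{xa} = P_x P_a$" into a statement about the $L$'s. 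A cleaner approach: apply Lemma \lemref{Ps} with $x$ replaced by $xa$, giving $P_{xa^2} = P_{xa}P_a = P_x P_a^2$; combined with $P_{a^2} = P_a^2$ (the $a=1,x=a$ case or directly), this yields $P_{x a^2} = P_x P_{a^2}$ for all $x$. I then want to promote this identity on $P$'s to the desired identity on $L$'s. Here I would use \eqnref{Ds}, \eqnref{aip}, and the definition $P_x = L_x L_{x\inv}\inv$ to express $L_{a^2}$-conjugation in terms of $P$'s and the inversion map $J$ (note $a^2$ is a square, so $D_{a^2} = D_a J D_a$ by \eqnref{Ds} relates division by $a^2$ to the $P$-permutations), and push the commutation through.

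The step I expect to be the main obstacle is precisely this last translation from a commutation relation among the $P_x$ to the commutation relation $L_{a^2}L_x = L_x L_{a^2}$. The $P_x$ only see "squared" information — indeed the whole associated-Bruck-loop construction is built on $y\mapsto (y^2)P_x$ — so a priori knowing $P_{xa^2} = P_x P_{a^2}$ controls $L_{a^2}$ only up to the ambiguity of square roots, which is exactly why the conclusion is $a^2\in Z(Q,\cdot)$ and not $a\in Z(Q,\cdot)$. I anticipate one needs to combine $P_{xa^2} = P_x P_{a^2}$ with a second relation — perhaps obtained by feeding \eqnref{ugh} (which says $x\inv\cdot xP_a = a^2$) and \eqnref{poweralt} back in — to pin down $xa^2\cdot(\,\cdot\,)$ on the nose rather than after squaring. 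Once $L_{a^2}L_x = L_x L_{a^2}$ is in hand, Lemma \lemref{nucr-comm} and Proposition \prpref{aut-nuc} finish the proof in one line.
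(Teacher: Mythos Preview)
Your reduction is exactly the paper's: show $L_{a^2}L_x = L_xL_{a^2}$ for all $x$, then invoke Lemma \lemref{nucr-comm} and Proposition \prpref{aut-nuc}. You also correctly single out \eqnref{poweralt} (giving $L_{a^2}=L_a^2$ and $L_{a\inv}=L_a\inv$, hence in fact $P_a=L_aL_{a\inv}\inv=L_a^2=L_{a^2}$) and Lemma \lemref{Ps} as the key inputs. However, you leave the passage from the $P$-identity $P_{xa}=P_xP_a$ to the desired $L$-commutation as an acknowledged obstacle, and the route you sketch through \eqnref{Ds} is not the one that closes the gap.

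The missing ingredient is the inner-mapping identity \eqnref{laip}, $L_{u,v}=L_{u\inv,v\inv}$. The paper writes $L_a^2L_x=L_a\,L_{a,x}\,L_{xa}$, replaces $L_{a,x}$ by $L_{a\inv,x\inv}=L_{a\inv}L_{x\inv}L_{x\inv a\inv}\inv$, and then uses \eqnref{poweralt} (so that $L_aL_{a\inv}$ is the identity) together with $x\inv a\inv=(xa)\inv$ from \eqnref{aip} to collapse the expression to $L_{x\inv}P_{xa}$. Now \eqnref{Ps} gives $L_{x\inv}P_{xa}=L_{x\inv}P_xP_a=L_xP_a=L_xL_{a^2}$, completing the chain. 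Thus the ``second relation'' you were searching for is precisely \eqnref{laip}; without it, knowing only that $P_a$ commutes with every $P_y$ and that $P_{xa}=P_xP_a$ does not by itself force $P_a=L_{a^2}$ to commute with every $L_y$.
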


\begin{proof}
We compute
\begin{alignat*}{2}
L_{a^2} L_x &\by{poweralt} L_a^2 L_x
&&= L_a L_{a,x} L_{xa} \\
&\by{laip} L_a L_{a\inv,x\inv} L_{xa}
&&= L_a L_{a\inv} L_{x\inv} L_{x\inv a\inv}\inv L_{xa} \\
&\by{poweralt}  L_{x\inv} L_{x\inv a\inv}\inv L_{xa}
&&\by{aip} L_{x\inv} L_{(xa)\inv}\inv L_{xa} \\
&= L_{x\inv} P_{xa}
&&\by{Ps} L_{x\inv} P_x P_a \\
&= L_x L_a L_{a\inv}\inv
&&\by{poweralt} L_x L_a^2 \\
&\by{poweralt} L_x L_{a^2}\,. &&
\end{alignat*}
By Lemma \lemref{nucr-comm}, it follows that $a^2 \in N_{\rho}(Q,\cdot)$, and $N_\rho(Q,\cdot) = Z(Q,\cdot)$ by Proposition \prpref{aut-nuc}.
\end{proof}

\begin{lemma}
\lemlabel{inclusion1}
Let $(Q,\cdot)$ be a uniquely $2$-divisible commutative automorphic loop with
associated Bruck loop $(Q,\circ)$. Then $Z(Q,\circ)\subseteq Z(Q,\cdot)$.
\end{lemma}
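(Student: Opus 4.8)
The goal is to show that every $a\in Z(Q,\circ)$ also lies in $Z(Q,\cdot)$. Since $(Q,\cdot)$ is commutative automorphic, $Z(Q,\cdot)=N_{\rho}(Q,\cdot)$ by Proposition \prpref{aut-nuc}, so by Lemma \lemref{nucr-comm} it is enough to prove $L_aL_x=L_xL_a$ for all $x\in Q$. First I would collect what the preceding lemmas give for such an $a$: by Lemma \lemref{leftmult} we have $L_{a\inv}=L_a\inv$ and $L_a^2=L_{a^2}$, hence $P_a=L_aL_{a\inv}\inv=L_a^2$; and by Lemma \lemref{close} we have $a^2\in Z(Q,\cdot)$, so (using Lemma \lemref{nucr-comm} and the fact that $\Mlt(Q,\cdot)=\sbl{L_x\mid x\in Q}$ because $Q$ is commutative) $L_a^2=L_{a^2}$ is central in $\Mlt(Q,\cdot)$.

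The technical heart, and the step I expect to be the main obstacle, is to extract from this the single commutation relation $L_aP_x=P_xL_a$ for all $x\in Q$. To get it I would use the automorphic-inverse identity \eqnref{laip} twice: once on $L_{x,a}$ and once on $L_{a,x}$. Expanding $L_{u,v}=L_uL_vL_{vu}\inv$, replacing $L_{a\inv}$ by $L_a\inv$, and using the automorphic inverse property \eqnref{aip} together with commutativity to rewrite $a\inv x\inv$ as $(ax)\inv$, each of the two instances of \eqnref{laip} can be solved for $L_{(ax)\inv}\inv$. Equating the two resulting expressions, cancelling the common factor $L_{ax}\inv$, and then invoking centrality of $L_a^2$ leaves $L_aL_{x\inv}\inv L_x=L_{x\inv}\inv L_xL_a$; since $L_{x\inv}\inv L_x=P_x$, this is precisely $L_aP_x=P_xL_a$. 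This is routine translation algebra, but delicate, as it is the only point where the special identity \eqnref{laip} of commutative automorphic loops is genuinely used.

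The conclusion then follows by a short argument with inner mappings. By \eqnref{3.2}, $1P_x=x^2$ for every $x$. Fix $t\in Q$ and use unique $2$-divisibility to write $t=y^2$, so $1P_y=t$. Put $\mu=P_yL_t\inv$; this lies in $\Mlt(Q,\cdot)$ and fixes $1$ (since $1P_y=t$), hence $\mu\in\Inn(Q,\cdot)\le\Aut(Q,\cdot)$. Evaluating the relation $L_aP_y=P_yL_a$ at $1$ gives $aP_y=(1P_y)L_a=tL_a=at$, so $\mu$ fixes $a$ as well. Because $\mu$ is an automorphism fixing $a$, it commutes with $L_a$: indeed $\mu\inv L_a\mu=L_{a\mu}=L_a$. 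Finally, from $\mu=P_yL_t\inv$ we get $L_t=\mu\inv P_y$, so $L_aL_t=\mu\inv L_aP_y=\mu\inv P_yL_a=L_tL_a$. Since $t$ was arbitrary, $a\in C(Q,\cdot)\cap N_{\rho}(Q,\cdot)=N_{\rho}(Q,\cdot)=Z(Q,\cdot)$.

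One could instead try to prove directly that $Z(Q,\circ)$ is closed under the square-root operation of $(Q,\cdot)$; combined with Lemma \lemref{close} this would immediately give $Z(Q,\circ)=\{b^2\mid b\in Z(Q,\circ)\}\subseteq Z(Q,\cdot)$. I would not take this route, since establishing the closure appears to require essentially the same computations as above.
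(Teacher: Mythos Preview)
Your argument is correct, but it is considerably more elaborate than what the paper does. The paper's proof is essentially two lines: from Lemma \lemref{close} we have $a^2\in Z(Q,\cdot)$, so $a^2$ is fixed by every inner mapping $L_{x,y}$; since $L_{x,y}$ is an automorphism, $(aL_{x,y})^2=a^2L_{x,y}=a^2$, and unique $2$-divisibility gives $aL_{x,y}=a$, hence $a\in Z(Q,\cdot)$. In other words, in a uniquely $2$-divisible \emph{automorphic} loop the implication $a^2\in Z(Q,\cdot)\Rightarrow a\in Z(Q,\cdot)$ is immediate from $Z(Q,\cdot)=\Fix(\Inn(Q,\cdot))$, and that is all that is needed after Lemma \lemref{close}.

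Your route instead re-enters the translation calculus: you first establish $L_aP_x=P_xL_a$ via two applications of \eqnref{laip} and centrality of $L_a^2$, and then use an inner-mapping trick ($\mu=P_yL_t\inv$ is an automorphism fixing both $1$ and $a$) to promote this to $L_aL_t=L_tL_a$. Both steps check out, and the intermediate identity $L_aP_x=P_xL_a$ is pleasant in its own right, but none of it is necessary here. Ironically, the short argument you dismiss at the end (``essentially the same computations'') is much closer in spirit to the paper's proof than your main line, except that the paper avoids even the detour through square-root closure of $Z(Q,\circ)$: it passes directly from $a^2\in Z(Q,\cdot)$ to $a\in Z(Q,\cdot)$ without ever going back to $(Q,\circ)$.
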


\begin{proof}
Assume that $a\in Z(Q,\circ)$. Then $a^2\in Z(Q,\cdot)$ by Lemma \lemref{close}, and thus $(aL_{x,y})^2 =a^2 L_{x,y} = a^2$ for every $x$, $y\in Q$. Taking square roots yields $aL_{x,y}=a$, that is, $a\in Z(Q,\cdot)$.
\end{proof}

Now we prove Theorem \thmref{centers}, that is, we show that the upper central series
of $(Q,\cdot)$ and $(Q,\circ)$ coincide.

\begin{proof}[Proof of Theorem \thmref{centers}]
Since each $Z_n(Q)$ is the preimage of $Z(Q/Z_{n-1}(Q))$ under the canonical
projection, it follows by induction that it suffices to show $Z(Q,\circ) = Z(Q,\cdot)$.
One inclusion is Lemma \lemref{inclusion1}. For the other, suppose $a\in Z(Q,\cdot)$.
Then $P_a P_x = L_a L_{a\inv}\inv L_x L_{x\inv}\inv
= L_x L_{x\inv}\inv L_a L_{a\inv}\inv = P_x P_a$, and so
$a\in Z(Q,\circ)$ by Lemma \lemref{center-char}.
\end{proof}

\begin{proof}[Proof of Theorem \ref{Th:Main}]
For an odd prime $p$, let $Q$ be a commutative automorphic $p$-loop with
associated Bruck loop $(Q,\circ)$. By \cite{Gl1}, $(Q,\circ)$ is centrally
nilpotent of class, say, $n$. By Theorem \thmref{centers}, $Q$ is also centrally
nilpotent of class $n$.
\end{proof}

\section{From anisotropic planes to automorphic $p$-loops with trivial nucleus}
\seclabel{noncomm}

We proved in \cite{JKV2} that a commutative automorphic loop of order $p$, $2p$, $4p$, $p^2$, $2p^2$ or $4p^2$ is an abelian group. For every prime $p$ there exist nonassociative commutative automorphic loops of order $p^3$. These loops have been classified up to isomorphism in \cite{deBaGrVo}, where the announced Theorem \ref{Th:Main} has been used to guarantee nilpotency for $p$ odd.

Without commutativity, we do not even know whether automorphic loops of order $p^2$ are associative! Nevertheless we show here that the situation is much more complicated than in the commutative case already for loops of order $p^3$. Namely, we construct a family of automorphic loops of order $p^3$ with trivial nucleus.

\subsection{Anisotropic planes}

Let $F$ be a field, $V$ a finite-dimensional vector space over $F$, and $q:V\to F$ a quadratic form. A subspace $W\le V$ is \emph{isotropic} if $q(x) = 0$ for some $0\ne x\in W$, else it is \emph{anisotropic}.

It is well known that if $F$ is a finite field and $\dim{V}\ge 3$ then $V$ must be isotropic. (See \cite[Thm. 3.8]{Sch} for a proof in odd characteristic.) Moreover, if $F = GF(p)$ then there is a unique anisotropic space of dimension $2$ over $F$ up to isometry. (See \cite[Cor. 3.10]{Sch} for $p$ odd. If $p=2$ and $V=\langle x,y\rangle$, we must have $q(0)=0$, $q(x)=q(y)=q(x+y)=1$ for $V$ to be anisotropic.) Let us call anisotropic subspaces of dimension two \emph{anisotropic planes}.

Since our construction is based on elements of anisotropic planes rather than on the planes themselves, we will first have a detailed look at anisotropic planes in $M(2,F)$, the vector space of $2\times 2$ matrices over $F$. The determinant
\begin{displaymath}
    \det:M(2,F)\to F,\quad \det\mat{a_1}{a_2}{a_3}{a_4} = a_1a_4-a_2a_3
\end{displaymath}
is a quadratic form on $M(2,F)$. If $FC\oplus FD$ is an anisotropic plane in $M(2,F)$ then $C^{-1}(FC\oplus FD)$ is also anisotropic, and hence, while looking for anisotropic planes, it suffices to consider subspaces $FI\oplus FA$, where $I$ is the identity matrix and $A\in GL(2,F)$.

\begin{lemma}\label{Lm:AnisotropicPlanes}
With $A\in M(2,F)$, the subspace $FI\oplus FA$ is an anisotropic plane if and only if the characteristic polynomial $\det(A-\lambda I) = \lambda^2 - \trace(A)\lambda + \det(A)$ has no roots in $F$.
\end{lemma}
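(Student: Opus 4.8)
The plan is to describe the set of determinants of nonzero vectors in $FI \oplus FA$ explicitly and compare it with the root set of the characteristic polynomial. A nonzero element of $FI \oplus FA$ has the form $\alpha I + \beta A$ with $(\alpha,\beta)\in F^2\setminus\{(0,0)\}$. If $\beta = 0$, then $\det(\alpha I + \beta A) = \det(\alpha I) = \alpha^2 \neq 0$, so elements in the $I$-direction never witness isotropy. Hence $FI \oplus FA$ is anisotropic if and only if $\det(\alpha I + \beta A)\neq 0$ for all $\alpha\in F$ and all $\beta\in F$ with $\beta\neq 0$ (granted that $FI\oplus FA$ genuinely has dimension two, which I will check at the end).

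For $\beta\neq 0$ I would invoke the homogeneity of the determinant on $2\times 2$ matrices: setting $\lambda = -\alpha/\beta$, we have $\alpha I + \beta A = \beta(A - \lambda I)$, and therefore $\det(\alpha I + \beta A) = \beta^2\det(A-\lambda I) = \beta^2\bigl(\lambda^2 - \trace(A)\lambda + \det(A)\bigr)$. As $\alpha$ ranges over $F$ and $\beta$ over $F\setminus\{0\}$, the scalar $\lambda = -\alpha/\beta$ takes every value in $F$, while $\beta^2\neq 0$ always. Consequently $\det(\alpha I + \beta A)\neq 0$ for all admissible $(\alpha,\beta)$ precisely when $\lambda^2 - \trace(A)\lambda + \det(A)\neq 0$ for every $\lambda\in F$, i.e. when $\det(A-\lambda I)$ has no root in $F$.

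It remains to reconcile this with the requirement that $FI\oplus FA$ be a plane. If the characteristic polynomial has no root in $F$, then in particular $A\neq\lambda I$ for every $\lambda\in F$ (else $\lambda$ would be a root), so $I$ and $A$ are linearly independent and $FI\oplus FA$ is two-dimensional; conversely, calling something an anisotropic \emph{plane} already presupposes dimension two. Combined with the determinant computation, this yields the stated equivalence.

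There is essentially no hard step here: the only points requiring care are to handle the direction $\beta = 0$ separately (so as never to divide by $\beta$) and to verify that the hypothesis on the characteristic polynomial forces $\dim(FI\oplus FA) = 2$ rather than assuming it.
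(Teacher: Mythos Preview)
Your proof is correct and follows essentially the same approach as the paper's: both reduce the anisotropy condition $\det(\lambda I + \mu A)\neq 0$ for $(\lambda,\mu)\neq(0,0)$ to the nonvanishing of $\det(A-\lambda I)$ for all $\lambda\in F$ via homogeneity of the determinant. You have simply spelled out explicitly what the paper's one-line proof leaves implicit---the $\beta=0$ case, the scaling $\alpha I+\beta A=\beta(A-\lambda I)$, and the verification that the characteristic polynomial having no root forces $\dim(FI\oplus FA)=2$.
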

\begin{proof}
The subspace $FI\oplus FA$ is anisotropic if and only if $\det(\lambda I + \mu A)\ne 0$ for every $\lambda$, $\mu$ such that $(\lambda,\mu)\ne (0,0)$, or, equivalently, if and only if $\det(A-\lambda I)\ne 0$ for every $\lambda$. We have $\det(A-\lambda I) = \lambda^2 - \trace(A)\lambda + \det(A)$.
\end{proof}

We will now impose additional conditions on anisotropic planes over finite fields and establish their existence or non-existence. We will take advantage of the following strong result of Perron \cite[Thms. 1 and 3]{Perron} concerning additive properties of the set of quadratic residues.

A nonzero element $a\in GF(p)$ is a \emph{quadratic residue} if $a=b^2$ for some $b\in GF(p)$. A nonzero element $a\in GF(p)$ that is not a quadratic residue is a \emph{quadratic nonresidue}.

\begin{theorem}[Perron]\label{Th:Perron}
Let $p$ be a prime, $N_p$ the set of quadratic nonresidues, and $R_p = \{a\in GF(p);\;a$ is a quadratic residue or $a=0\}$.
\begin{enumerate}
\item[(i)] If $p=4k-1$ and $a\ne 0$ then $|(R_p+a)\cap R_p| = k = |(R_p+a)\cap N_p|$.
\item[(ii)] If $p=4k+1$ and $a\ne 0$ then $|(R_p+a)\cap R_p| = k+1$, $|(R_p+a)\cap N_p| = k$.
\end{enumerate}
\end{theorem}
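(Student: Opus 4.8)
The plan is to reduce the whole statement to the evaluation of a single quadratic character sum. Write $\chi$ for the quadratic character of $GF(p)$ (the Legendre symbol), extended by $\chi(0)=0$, so that $\sum_{x\in GF(p)}\chi(x)=0$ and, for $t\ne 0$, one has $t\in R_p$ precisely when $\chi(t)=1$. The indicator function of $R_p$ is then
\[
\mathbf{1}_{R_p}(t)=\tfrac12\bigl(1+\chi(t)\bigr)+\tfrac12\,\delta_{t,0},
\]
the extra $\tfrac12\delta_{t,0}$ correcting for the fact that $\chi(0)=0$ although $0\in R_p$. Consequently
\[
\bigl|(R_p+a)\cap R_p\bigr|=\sum_{x\in GF(p)}\mathbf{1}_{R_p}(x)\,\mathbf{1}_{R_p}(x-a),
\]
and I would expand the product into its four natural pieces.

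The dominant piece is $\tfrac14\sum_x\bigl(1+\chi(x)\bigr)\bigl(1+\chi(x-a)\bigr)$; since $\sum_x\chi(x)=\sum_x\chi(x-a)=0$, it equals $\tfrac14\bigl(p+J(a)\bigr)$, where $J(a)=\sum_x\chi(x)\chi(x-a)$. The key computation is that $J(a)=-1$ whenever $a\ne 0$: for $x\ne 0$ one has $\chi(x)\chi(x-a)=\chi(x)^2\chi(1-a/x)=\chi(1-a/x)$, and as $x$ runs through $GF(p)\setminus\{0\}$ the element $1-a/x$ runs through $GF(p)\setminus\{1\}$, so $J(a)=\sum_{t\ne 1}\chi(t)=-\chi(1)=-1$. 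The two boundary pieces, coming from the $\delta_{x,0}$ and $\delta_{x-a,0}$ terms, contribute $\tfrac14\bigl(1+\chi(-a)\bigr)$ and $\tfrac14\bigl(1+\chi(a)\bigr)$ respectively, while the doubly singular term vanishes because $a\ne 0$. Summing the four contributions gives $\bigl|(R_p+a)\cap R_p\bigr|=\tfrac14\bigl(p+1+\chi(a)(1+\chi(-1))\bigr)$.

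It then remains only to specialise, using $\chi(-1)=(-1)^{(p-1)/2}$. When $p=4k-1$ we have $\chi(-1)=-1$, the correction term disappears, and the count is $(p+1)/4=k$ for every $a\ne 0$; when $p=4k+1$ we have $\chi(-1)=1$ and the count is $(p+1+2\chi(a))/4$, which is $k+1$ when $a$ is a quadratic residue. The corresponding intersections with $N_p$ then follow by complementation: since $GF(p)=R_p\sqcup N_p$ we have $\bigl|(R_p+a)\cap N_p\bigr|=|R_p+a|-\bigl|(R_p+a)\cap R_p\bigr|=\tfrac{p+1}{2}-\bigl|(R_p+a)\cap R_p\bigr|$. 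I expect the only delicate point to be the bookkeeping around the element $0$, which lies in $R_p$ but is invisible to $\chi$, together with keeping track of the sign $\chi(-1)$; beyond that this is the standard evaluation of a Jacobsthal-type sum, and one could equally well simply invoke \cite[Thms.~1 and 3]{Perron}.
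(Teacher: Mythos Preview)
The paper does not prove this theorem at all; it is quoted from Perron's 1952 paper and used as a black box in the subsequent lemma. So there is no proof in the paper to compare against. Your character-sum argument via the Jacobsthal evaluation $\sum_x\chi(x)\chi(x-a)=-1$ is the standard proof, and your computations are correct.

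In fact your own formula reveals a small inaccuracy in the paper's statement of part~(ii). You obtain
\[
\bigl|(R_p+a)\cap R_p\bigr|=\tfrac14\bigl(p+1+\chi(a)(1+\chi(-1))\bigr),
\]
which for $p=4k+1$ gives $k+1$ when $a$ is a quadratic residue but only $k$ when $a$ is a quadratic nonresidue (and then $\bigl|(R_p+a)\cap N_p\bigr|=k+1$ by your complementation). A direct check at $p=5$, $a=2$ confirms this: $R_5+2=\{1,2,3\}$ meets $R_5=\{0,1,4\}$ only in $\{1\}$. You were right to qualify with ``which is $k+1$ when $a$ is a quadratic residue''; the two counts in (ii) swap when $a$ is a nonresidue. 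This does not affect the paper's application, since the only use made of the theorem is that both intersections have size at least $k$, and that holds in either case.
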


\begin{lemma}\label{Lm:Additive}
Let $p\ge 5$ be a prime. Then there is a quadratic nonresidue $a$ and quadratic residues $b$, $c$ such that $b-a$ is a quadratic residue and $c-a$ is a quadratic nonresidue.
\end{lemma}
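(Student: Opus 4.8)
The plan is to deduce everything from Perron's theorem (Theorem~\ref{Th:Perron}). The quantities to control are sizes of intersections of $R_p$ (residues together with $0$) and $N_p$ (nonresidues) with translates $R_p+a$. First I would fix a quadratic nonresidue $a$; there are $(p-1)/2 \ge 2$ of them for $p \ge 5$, so such an $a$ exists. What we must find is $b \in R_p$ with $b - a \in R_p\setminus\{0\}$ (i.e. $b-a$ a quadratic residue) and $c \in R_p$ with $c - a \in N_p$ (i.e. $c - a$ a quadratic nonresidue). Equivalently, we need $b \in R_p \cap (R_p + a)$ with $b \ne a$ (to ensure $b - a \ne 0$), and $c \in R_p \cap (N_p + a)$.

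For the second requirement, observe $R_p \cap (N_p + a)$ has the same cardinality as $(R_p - a) \cap N_p = (R_p + (-a)) \cap N_p$; since $-a$ is nonzero, Perron's theorem gives this cardinality to be $k$ if $p = 4k-1$ and $k$ if $p = 4k+1$, in either case at least $1$ because $p \ge 5$ forces $k \ge 1$. So a suitable $c$ exists. For the first requirement, Perron gives $|(R_p + a) \cap R_p| = k$ (if $p = 4k-1$) or $k+1$ (if $p=4k+1$), again at least $1$; but I must also arrange $b \ne a$. The only way $b = a$ can lie in $R_p \cap (R_p + a)$ is if $a \in R_p$, which is false since $a$ is a nonresidue — so in fact every element of $R_p \cap (R_p+a)$ automatically differs from $a$, and moreover $b \ne 0$ would need checking only if $0 \in R_p \cap (R_p + a)$, but $b \in R_p$ is allowed to be any residue or $0$, and we only need $b - a$ to be a quadratic residue, i.e. $b - a \in R_p \setminus \{0\}$: since $b \in R_p + a$ means $b - a \in R_p$, and $b - a = 0$ would force $b = a \notin R_p$, contradiction, we get $b - a \ne 0$ for free. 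Thus any $b \in R_p \cap (R_p + a)$ works.

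Assembling: pick any nonresidue $a$; pick $b \in R_p \cap (R_p+a)$ (nonempty by Perron) and $c$ with $c \in R_p$, $c - a \in N_p$ (nonempty by the translate-of-$N_p$ count above). Then $b, c$ are quadratic residues or zero — but actually the statement wants $b, c$ to be quadratic \emph{residues}, so I should double-check they are nonzero: $b = 0$ would give $b - a = -a$, which is a residue iff $-1$ is a residue, and this may or may not hold, so to be safe I would note that $R_p \cap (R_p + a)$ contains a nonzero element whenever the count exceeds $1$, or handle the boundary case $p=5$ by direct inspection. This is the one place requiring a little care.

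\textbf{Main obstacle.} The real content is entirely Perron's theorem; the only genuine difficulty is the bookkeeping around the element $0 \in R_p$ — making sure the $b$ and $c$ produced are honest quadratic residues (nonzero) and that $b-a$, $c-a$ are genuinely nonzero of the required type. I expect the cleanest route is to argue directly with $R_p$ throughout and only at the very end observe that $0$ can be excluded either because the Perron counts leave room or by checking $p = 5$ by hand.
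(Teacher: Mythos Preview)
Your proposal is correct and follows essentially the same route as the paper: fix a nonresidue $a$, invoke Perron's theorem to bound $|(R_p+a)\cap R_p|$ and $|(R_p-a)\cap N_p|$ from below, and then deal with the possibility that the element produced is $0$ by treating small primes directly. The paper is a bit more conservative than you, taking the cutoff $k\ge 3$ (so it checks both $p=5$ and $p=7$ by hand rather than only $p=5$), precisely to give enough room to exclude $0$ from both intersections without further argument; your sharper accounting would also work, but don't forget that the same $c\ne 0$ check is needed for the second intersection.
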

\begin{proof}
Let $p=4k\pm 1$. If $k\ge 3$ then we are done by Theorem \ref{Th:Perron}, since $|(R_p-a)\cap R_p|$, $|(R_p-a)\cap N_p|\ge 3$. (We need $k\ge 3$ to be able to pick $b\in R_p\setminus\{0\}$ such that $b-a\in R_p\setminus\{0\}$.) If $p=7$ then $a=3$, $b=4$, $c=1$ do the job. If $p=5$ then $a=2$, $b=1$, $c=4$ do the job.
\end{proof}

\begin{lemma}\label{Lm:3Types}
Let $p$ be a prime and $F=GF(p)$.
\begin{enumerate}
\item[(i)] There is $A\in GL(2,p)$ such that $\trace(A) = 0$ and $FI\oplus FA$ is anisotropic if and only if $p\ne 2$.
\item[(ii)] There is $A\in GL(2,p)$ such that $\trace(A)\ne 0$, $\det(A)$ is a quadratic residue modulo $p$ and $FI\oplus FA$ is anisotropic if and only if $p\ne 3$.
\item[(iii)] There is $A\in GL(2,p)$ such that $\trace(A)\ne 0$, $\det(A)$ is a quadratic nonresidue modulo $p$ and $FI\oplus FA$ is anisotropic if and only if $p\ne 2$.
\end{enumerate}
\end{lemma}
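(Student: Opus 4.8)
The plan is to reduce each of the three statements, via Lemma \ref{Lm:AnisotropicPlanes}, to the existence (or non-existence) of a matrix $A\in GL(2,p)$ whose characteristic polynomial $\lambda^2-\trace(A)\lambda+\det(A)$ is irreducible over $F=GF(p)$ while additionally prescribing the value of $\trace(A)$ and the quadratic character of $\det(A)$. Since $\trace(A)$ and $\det(A)$ can be chosen freely (for instance via a companion matrix $\bigl(\begin{smallmatrix}0&-d\\1&t\end{smallmatrix}\bigr)$, which has trace $t$, determinant $d$, and lies in $GL(2,p)$ whenever $d\ne 0$), the problem is purely about the discriminant: $\lambda^2-t\lambda+d$ has no root in $F$ if and only if $t^2-4d$ is a quadratic nonresidue (here we use $p$ odd so that $4$ is invertible; the cases $p=2$ must be handled separately). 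So the core of the argument is: given constraints on $t$ and the character of $d$, can we arrange $t^2-4d\in N_p$?

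For (i): take $t=0$, so we need $-4d=-d'$ (with $d'=4d$ ranging over all nonzero values as $d$ does) to be a nonresidue. Such $d$ exists as soon as $p\ne 2$; one can exhibit $A=\bigl(\begin{smallmatrix}0&\delta\\1&0\end{smallmatrix}\bigr)$ with $\delta$ a nonresidue when $p\equiv 3\pmod 4$, and with $-\delta$ a nonresidue (equivalently $\delta$ a residue, since $-1\in N_p$) when $p\equiv 1\pmod 4$. For $p=2$, $\trace(A)=0$ forces $A$ to be one of $0$, $\bigl(\begin{smallmatrix}1&0\\0&1\end{smallmatrix}\bigr)$, $\bigl(\begin{smallmatrix}0&1\\1&0\end{smallmatrix}\bigr)$ (up to the plane spanned), each of which has $\lambda^2+\det(A)$ with a root in $GF(2)$, so $FI\oplus FA$ is isotropic; hence (i) fails exactly for $p=2$.

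For (ii) and (iii) the substantive content is the $p\ge 5$ case, which is precisely what Lemma \ref{Lm:Additive} was proved for: choosing $a\in N_p$, $b,c\in$ residues with $b-a\in$ residues and $c-a\in$ nonresidues, one sets (writing everything in terms of $t^2-4d$) $4d = t^2 - a$ with $t$ chosen so that $t^2\in\{b,c\}$, giving $\det(A)=d$ a residue in one case and a nonresidue in the other, while $\trace(A)=t\ne 0$ and $t^2-4d=a\in N_p$ guarantees anisotropy; a small amount of care is needed to ensure $t\ne 0$, $d\ne 0$, and that $t^2$ actually realizes a residue value we want (any nonzero residue is a square, so $t^2$ ranges over all nonzero residues). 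The remaining cases are finite and checked by hand: for $p=2$ statement (ii) asks for $\det(A)$ a residue, i.e.\ $\det(A)=1$, with $\trace(A)\ne 0$, i.e.\ $\trace(A)=1$, so the characteristic polynomial is $\lambda^2+\lambda+1$, which is irreducible over $GF(2)$ — so (ii) \emph{holds} for $p=2$, consistent with the statement (which excludes only $p=3$); one must then verify (ii) fails for $p=3$, where the only nonresidue is $2$ and one checks $t^2-4d=t^2-d$ for $t\in\{1,2\}$, $d\in\{1\}$ (the nonzero residue) never equals $2\pmod 3$, so no anisotropic plane with residue determinant and nonzero trace exists; and for (iii) with $p=3$ one exhibits $t=1$, $d=2$ giving $\lambda^2-\lambda+2$ with discriminant $1-8\equiv 2\in N_3$, while $p=2$ has no nonresidues at all so (iii) fails there. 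The main obstacle is the bookkeeping in the $p=3$ sub-cases of (ii) and (iii): one must be scrupulous about the asymmetry (the residue-determinant case is obstructed but the nonresidue-determinant case is not) and this is easiest to settle by simply enumerating the at most four candidate characteristic polynomials over $GF(3)$.
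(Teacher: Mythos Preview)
Your approach is essentially the paper's: reduce via Lemma~\ref{Lm:AnisotropicPlanes} to the irreducibility of $\lambda^2 - t\lambda + d$, invoke Lemma~\ref{Lm:Additive} for $p\ge 5$, and handle $p\in\{2,3\}$ by hand; the paper just packages this in the explicit family $M_{a,b}=\left(\begin{smallmatrix}-b&1\\a&-b\end{smallmatrix}\right)$ (with $a$ a fixed nonresidue and $b$ varying), which has the bonus that all $M_{a,b}$ lie in the single anisotropic plane $FI\oplus FM_{a,0}$.

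One slip to fix in part~(i): for $A=\left(\begin{smallmatrix}0&\delta\\1&0\end{smallmatrix}\right)$ the characteristic polynomial is $\lambda^2-\delta$, so anisotropy is equivalent to $\delta$ being a nonresidue, independent of $p\bmod 4$. Your case split is unnecessary, and the parenthetical ``$-1\in N_p$'' in the $p\equiv 1\pmod 4$ branch is backwards ($-1$ is a nonresidue precisely when $p\equiv 3\pmod 4$). Simply take $\delta$ a nonresidue and delete the split.
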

\begin{proof}
Let $p\ge 3$. For a quadratic nonresidue $a$ and any $b\in F$, let
\begin{displaymath}
    M_{a,b} = \mat{-b}{1}{a}{-b}.
\end{displaymath}
Since $M_{a,b} = M_{a,0}-bI$, we have $FI\oplus FM_{a,b} = FI\oplus FM_{a,0}$. Now, $\trace(M_{a,0})=0$, $\det(M_{a,0}-\lambda I) =\lambda^2-a$ has no roots, so $FI\oplus FM_{a,b}$ is anisotropic by Lemma \ref{Lm:AnisotropicPlanes}. Moreover, if $b\ne 0$ then $\trace(M_{a,b}) = -2b\ne 0$ and $\det(M_{a,b}) = b^2-a$.

If $p\ge 5$, Lemma \ref{Lm:Additive} implies that the parameters $a$ and $b\ne 0$ can be chosen so that $\det(M_{a,b})$ is a quadratic residue or nonresidue as we please.

Let $p=3$. Then $\det(M_{2,2})$ is a quadratic nonresidue. If $\trace(A)\ne 0$ and $\det(A)$ is a quadratic residue then $\det(A)=1$ and $\det(A-\lambda I)$ is equal to either $\lambda^2+\lambda+1$ (with root $1$) or $\lambda^2-\lambda+1$ (with root $-1$), so $FI\oplus FA$ is isotropic.

Let $p=2$. Then
\begin{displaymath}
    \mat{0}{1}{1}{1}
\end{displaymath}
satisfies the conditions of (ii). The only elements $A\in GL(2,p)$ with $\trace(A)=0$ are
\begin{displaymath}
    \mat{0}{1}{1}{0},\quad\mat{1}{0}{1}{1},\quad\mat{1}{1}{0}{1},
\end{displaymath}
all with $\det(A+I)=0$, so $FI\oplus FA$ is isotropic. There is no matrix satisfying the conditions of (iii) because there are no quadratic nonresidues in $GF(2)$.
\end{proof}

Let $p$ be a prime and $F=GF(p)$. Call an element $A\in GL(2,p)$ of an anisotropic plane $FI\oplus FA$ of \emph{type $1$} if $\trace(A)=0$, of \emph{type $2$} if $\trace(A)\ne 0$ and $\det(A)$ is a quadratic residue, and of \emph{type 3} if $\trace(A)\ne 0$ and $\det(A)$ is a quadratic nonresidue.

Note that for a fixed prime $p$ we can find elements $A$ of all possible types (with the restrictions of Lemma \ref{Lm:3Types}) in a single anisotropic plane. This is because we only used matrices $A=M_{a,b}$ with the same $a$ in the proof of Lemma \ref{Lm:3Types}, and $FI\oplus FM_{a,0} = FI\oplus FM_{a,b}$.

\subsection{Automorphic loops of order $p^3$ with trivial nucleus}

Let $A\in GL(2,p)$ be such that $FI\oplus FA$ is an anisotropic plane.
Define a binary operation on $F\times (F\times F)$ by
\begin{equation}\label{Eq:Construction}
    (a,x)\cdot (b,y) = (a + b, x(I+bA) + y(I-aA))
\end{equation}
and call the resulting groupoid $Q(A)$. Since
\begin{displaymath}
   U_a = I+aA
\end{displaymath}
is invertible for every $a\in F$, we see that $Q(A)$ is a loop (see Remark \ref{Rm:Det}),
and in fact, straightforward calculation shows that
\begin{align*}
    (b,y)L^{-1}_{(a,x)} &= (b-a, (y-xU_{b-a})U_{-a}^{-1})\,,\\
    (b,y)R^{-1}_{(a,x)} &= (b-a, (y-xU_{a-b})U_a^{-1})\,.
\end{align*}

\begin{lemma}
\lemlabel{automorphisms}
Let $F=GF(p)$. Let $A\in GL(2,p)$ be such that $FI\oplus FA$ is an anisotropic plane in $M(2,p)$.
For each $z\in F\times F$ and each $C\in GL(2,p)$ satisfying $CA = AC$, define
$\varphi_{z,C} : F\times (F\times F) \to F\times (F\times F)$ by
\[
(a,x)\varphi_{z,C} = (a,az + xC)\,.
\]
Then $\varphi_{z,C}$ is an automorphism of $Q(A)$.
\end{lemma}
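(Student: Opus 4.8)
The plan is to verify directly that $\varphi_{z,C}$ respects the multiplication \eqnref{Eq:Construction}, which is a pure computation once we exploit the hypothesis $CA = AC$. First I would check that $\varphi_{z,C}$ is a bijection: on the first coordinate it is the identity, and on each fibre $\{a\}\times(F\times F)$ it acts as $x\mapsto az + xC$, an affine map with invertible linear part $C\in GL(2,p)$, hence a bijection; so $\varphi_{z,C}$ permutes $F\times(F\times F)$. (It clearly fixes the identity element $(0,0)$ as well, though that is not needed for the statement.)

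The main step is the homomorphism identity. Write $u = (a,x)$, $v = (b,y)$. On one hand, using \eqnref{Eq:Construction},
\[
(u\cdot v)\varphi_{z,C} = \bigl(a+b,\ (a+b)z + (x U_b + y U_{-a})C\bigr),
\]
where $U_c = I + cA$. On the other hand,
\[
(u\varphi_{z,C})\cdot(v\varphi_{z,C}) = (a, az + xC)\cdot(b, bz + yC)
= \bigl(a+b,\ (az+xC)U_b + (bz+yC)U_{-a}\bigr).
\]
The first coordinates agree. For the second, since $CA = AC$ we get $C U_c = C(I+cA) = (I+cA)C = U_c C$, so $(xC)U_b = (xU_b)C$ and $(yC)U_{-a} = (yU_{-a})C$; thus the $x,y$-dependent part of the second expression is $(xU_b + yU_{-a})C$, matching the first expression. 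It remains to check the $z$-part: we need $(a+b)z = (az)U_b + (bz)U_{-a}$, i.e. $az\cdot U_b + bz\cdot U_{-a} = z(aU_b + bU_{-a})$. But $aU_b + bU_{-a} = a(I+bA) + b(I-aA) = (a+b)I$, so $z(aU_b + bU_{-a}) = (a+b)z$, as required. Hence $(u\cdot v)\varphi_{z,C} = (u\varphi_{z,C})\cdot(v\varphi_{z,C})$, and $\varphi_{z,C}$ is an automorphism.

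I do not expect any real obstacle here: the anisotropy of $FI\oplus FA$ is what makes $Q(A)$ a loop in the first place (each $U_a$ invertible), but it plays no role in this lemma beyond that; the only structural input used is commutativity $CA = AC$, which is exactly what forces the conjugation-by-$C$ to commute with right multiplication by $U_b$ and $U_{-a}$. The one point to be careful about is bookkeeping with the convention that $x,y$ are row vectors acted on from the right by matrices, so that $C$ "passes through" $U_b$ only because $CU_b = U_bC$ — if one mistakenly wrote $U_bC$ in place of $CU_b$ the argument would still go through by the same identity, but the direction of composition must be kept consistent with \eqnref{Eq:Construction}. The $z$-coordinate identity $aU_b + bU_{-a} = (a+b)I$ is the small "miracle" that makes the affine term $az$ transform correctly; it is worth displaying explicitly.
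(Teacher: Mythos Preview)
Your proof is correct and follows essentially the same direct verification as the paper: both compute $(a,x)\varphi_{z,C}\cdot(b,y)\varphi_{z,C}$, use $CA=AC$ to commute $C$ past $U_b$ and $U_{-a}$, and exploit the cancellation in the $z$-terms (the paper writes it as $abzA-abzA=0$, you as $aU_b+bU_{-a}=(a+b)I$). The only cosmetic difference is that you handle bijectivity first and more explicitly, whereas the paper dismisses it as clear at the end.
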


\begin{proof}
We compute
\begin{align*}
(a,x)\varphi_{z,C}\cdot (b,y)\varphi_{z,C} &=
(a,az+xC)\cdot (b,bz+ yC) \\
&= (a+b, (az+xC)U_b+ (bz+yC)U_{-a} ) \\
&= (a+b, (a+b)z + xCU_b + yCU_{-a} + abzA - abzA) \\
&= (a+b, (a+b)z + (xU_b + yU_{-a})C ) \\
&= [(a,x)\cdot (b,y)]\varphi_{z,C}\,,
\end{align*}
where we have used $CA = AC$ in the fourth equality. Since
$\varphi_{z,C}$ is clearly a bijection, we have the desired result.
\end{proof}

\begin{proposition}\label{Pr:Constr}
Let $F=GF(p)$. Let $A\in GL(2,p)$ be such that $FI\oplus FA$ is an anisotropic plane in $M(2,p)$.
Then the loop $Q = Q(A)$ defined on $F\times (F\times F)$ by \eqref{Eq:Construction} is an automorphic loop of order $p^3$ and exponent $p$
with $N_{\mu}(Q) = \setof{(0,x)}{x\in F\times F} \cong F\times F$ and
$N_{\lambda}(Q) = N_{\rho}(Q) = 1$. In particular, $N(Q) = Z(Q) = 1$ and so $Q$ is
not centrally nilpotent. In addition, if $p=2$ then $C(Q) = Q$, while if $p>2$,
then $C(Q) = 1$.
\end{proposition}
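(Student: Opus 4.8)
The plan is to handle the easy assertions first and then focus on the automorphic property. Order $p^3$ is clear, and $Q(A)$ is a loop because $(0,0)$ is a two-sided identity and the displayed formulas for $L_{(a,x)}^{-1}$, $R_{(a,x)}^{-1}$ exhibit inverses of all translations (using that each $U_a = I+aA$ is invertible, which in turn holds because $FI\oplus FA$ is anisotropic). For the exponent, a one-step induction from $(a,x)^2 = (2a,\,x(U_a+U_{-a})) = (2a,2x)$ gives $(a,x)^n = (na,nx)$ for all $n$, so $(a,x)^p = (0,0)$; since $p$ is prime, every non-identity element has order exactly $p$.

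The core of the argument is the automorphic property, which I would establish through Lemma~\lemref{automorphisms}. The key algebraic fact is that, since $FI\oplus FA$ is anisotropic, every nonzero element of the commutative subring $F[A] = FI\oplus FA\subseteq M(2,F)$ has nonzero determinant, so $F[A]$ is a field (isomorphic to $GF(p^2)$); hence $U_a\in F[A]$ is invertible with $U_a^{-1}\in F[A]$ for every $a$, and by Cayley--Hamilton $A^2 = \trace(A)A - \det(A)I\in F[A]$, so that $U_aU_b - U_{a+b} = ab\,A^2\in F[A]$ as well. I would then compute the standard generators $T_x$, $L_{x,y}$, $R_{x,y}$ of $\Inn Q$ directly; the calculations are routine because all the $U_e$ and $A$ commute inside $F[A]$, forcing the cross terms to cancel. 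Writing $c = a+b$, the outcome is
\[
(b,y)\,T_{(a,x)} = \bigl(b,\ {-2bx}\,AU_{-a}^{-1} + y\,U_aU_{-a}^{-1}\bigr),
\]
\[
(d,v)\,L_{(a,x),(b,y)} = \bigl(d,\ {-ad}\,y\,A^2U_{-c}^{-1} + v\,(I + ab\,A^2U_{-c}^{-1})\bigr),
\]
\[
(d,v)\,R_{(a,x),(b,y)} = \bigl(d,\ {-ad}\,y\,A^2U_{c}^{-1} + v\,(I + ab\,A^2U_{c}^{-1})\bigr).
\]
In each case the matrix acting on the right in the second coordinate lies in $F[A]$, so it commutes with $A$, and it is invertible (being a nonzero element of the field $F[A]$, or simply because it is the linear part of a bijection). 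Thus each generator is of the form $\varphi_{z,C}$ of Lemma~\lemref{automorphisms} and is therefore an automorphism; since these permutations generate $\Inn Q$, the loop $Q$ is automorphic. The only real obstacle is keeping the bookkeeping straight in these three computations; conceptually it is immediate once the coefficient matrices are recognized as units of $F[A]$.

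For the nuclei I would begin from $N_{\lambda}(Q) = N_{\rho}(Q)\le N_{\mu}(Q)$, which holds by Proposition~\prpref{aut-nuc} because $Q$ is automorphic. A direct check that $((b,y)(0,s))(c,z)$ and $(b,y)((0,s)(c,z))$ coincide --- the only term that could differ is $sU_bU_c$ versus $sU_cU_b$, which agree by commutativity of $F[A]$ --- shows $\setof{(0,s)}{s\in F\times F}\subseteq N_{\mu}(Q)$, and this set is a subloop isomorphic to $(F\times F,+)$ since $(0,s)(0,t) = (0,s+t)$. Conversely, expanding $((b,y)(a,x))(c,z) = (b,y)((a,x)(c,z))$ and comparing the coefficients of $y$ yields $ac\,yA^2 = 0$ for all $y$ and all $c$; as $A^2$ is invertible this forces $a=0$, so $N_{\mu}(Q) = \setof{(0,s)}{s\in F\times F}$. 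Similarly, writing out $((a,x)(b,y))(c,z) = (a,x)((b,y)(c,z))$, the coefficient of $z$ gives $ab\,zA^2 = 0$, forcing $a=0$, and then the coefficient of $x$ gives $bc\,xA^2 = 0$, forcing $x=0$; hence $N_{\lambda}(Q) = N_{\rho}(Q) = 1$. Consequently $N(Q) = 1$ and $Z(Q)\subseteq N(Q) = 1$, and since $Q\ne 1$ the upper central series of $Q$ is stationary at $1$, so $Q$ is not centrally nilpotent.

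Finally, from $(a,x)(b,y) = (a+b,\,xU_b + yU_{-a})$ and $(b,y)(a,x) = (a+b,\,yU_a + xU_{-b})$ one sees that $(a,x)$ commutes with $(b,y)$ if and only if $x(U_b - U_{-b}) = y(U_a - U_{-a})$, that is, $2bx\,A = 2ay\,A$, equivalently $2bx = 2ay$ since $A$ is invertible. If $p=2$ this is identically true, so $C(Q) = Q$. If $p$ is odd, taking $(b,y) = (1,0)$ forces $x=0$ and taking $(b,y) = (0,y)$ with $y\ne 0$ forces $a=0$, so $C(Q) = 1$.
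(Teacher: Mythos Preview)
Your proof is correct and follows essentially the same approach as the paper: compute the three standard inner-mapping generators explicitly, recognize each as a map $\varphi_{z,C}$ with $C$ commuting with $A$, and invoke Lemma~\lemref{automorphisms}; then handle the nuclei and commutant by direct calculation. Your observation that $F[A]=FI\oplus FA$ is a field (so all the $U_a$, $U_a^{-1}$, and $A^2$ commute and are invertible) is a clean conceptual packaging that the paper leaves implicit, and your treatment of $N_\lambda$ by direct expansion of the associator is a minor variant of the paper's use of the $R_{(a,x),(b,y)}$ formula, but the substance is the same.
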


\begin{proof}
Easy calculations show that
the standard generators of the inner mapping group of $Q(A)$ are
\begin{align}
    (b,y)T_{(a,x)} &= (b,(x(U_{-b} - U_b) + yU_a)U_{-a}^{-1})\,,\notag\\
    (c,z)R_{(a,x),(b,y)} &= (c,(zU_a U_b + y(U_{-c-a} - U_{-c} U_{-a})) U_{a+b}^{-1})\,,\label{Eq:InnerMaps}\\
    (c,z)L_{(a,x),(b,y)} &= (c,(zU_{-a} U_{-b} + y(U_{c+a} - U_c U_a)) U_{-a-b}^{-1})\,.\notag
\end{align}
Since $U_{-b} - U_b = -2bA$ and
$U_{c+a} - U_c U_a = U_{-c-a} - U_{-c} U_{-a} = -caA^2$, we find that each of these generators is of the form $\varphi_{u,C}$ for an appropriate $u\in F\times F$ and $C\in GL(2,p)$ commuting with $A$. Specifically, we have
\begin{alignat*}{3}
    T_{(a,x)} &= \varphi_{u,C} && \quad\text{where}\quad u = -2xAU_{-a}\inv &&\quad\text{and}\quad C = U_aU_{-a}\inv \,, \\
    R_{(a,x),(b,y)} &= \varphi_{u,C} && \quad\text{where}\quad u = -ayA^2 U_{a+b}\inv &&\quad\text{and}\quad C = U_a U_b U_{a+b}\inv\,,\\
    L_{(a,x),(b,y)} &= \varphi_{u,C} && \quad\text{where}\quad u = -ayA^2 U_{-a-b}\inv
    &&\quad\text{and}\quad C = U_{-a} U_{-b} U_{-a-b}\inv \,.
\end{alignat*}
Hence $Q(A)$ is automorphic by Lemma \lemref{automorphisms}.

An easy induction shows that powers in $Q(A)$ and in $F\times (F\times F)$ coincide, so $Q(A)$ has exponent $p$.

Suppose that $(a,x)\in N_{\mu}(Q)$. Then $(c,z)R_{(a,x),(b,y)}=(c,z)$ for every $(c,z), (b,y)$. Thus $(zU_aU_b + y(U_{-c-a}-U_{-c}U_{-a}))U_{a+b}^{-1} =z$ for every $(c,z), (b,y)$.
With $z = 0$, we have $y(U_{-c-a}-U_{-c}U_{-a}) = -cayA^2 =0$ for every $y$, hence
$caA^2 = 0$ for every $c$, and $a = 0$ follows. On the other hand, clearly
$(0,x)\in N_{\mu}(Q)$ for every $x$. We have thus shown
$N_{\mu}(Q) = \setof{(0,x)}{x\in F\times F} \cong F\times F$.

Suppose that $(c,z)\in N_{\lambda}(Q)$. By Proposition \prpref{aut-nuc}, $N_{\lambda}(Q) = N_{\rho}(Q) \le N_{\mu}(Q)$, so $c=0$. We then must have $(0,z)R_{(a,x),(b,y)} = (0,z)$, or $zU_aU_bU_{a+b}^{-1}=z$, or $abzA^2=0$ for every $a$, $b$. In particular, $zA^2=0$ and $z=0$. We have proved $N_{\lambda}(Q)=1$.

If $p=2$, then since $U_a=U_{-a}$, it follows that $Q$ is commutative.
Now assume that $p>2$ and let $(a,x)\in C(Q)$. Then $x(U_b-U_{-b}) = y(U_a-U_{-a})$,
that is, $2bxA = 2ayA$ for every $(b,y)\in Q$. With $b=0$ we deduce that $2ayA=0$ for every $y$, thus
$0 = 2aA$, or $a=0$. Then $2bxA=0$, and with $b=1$ we deduce $2xA=0$, or $x=0$.
We have proved that $C(Q) = 1$.
\end{proof}

\begin{remark}
The construction $Q(A)$ works for every real anisotropic plane $\mathbb RI\oplus \mathbb RA$ and results in an automorphic loop on $\mathbb R^3$ with trivial center. We believe that this is the first time a smooth nonassociative automorphic loop has been constructed.
\end{remark}

\begin{remark}\label{Rm:Det}
The groupoid $Q(A)$ is an automorphic loop as long as $I+aA$ is invertible for every $a\in F$, which is a weaker condition than having $FI\oplus FA$ an anisotropic plane, as witnessed by $A=0$, for instance. But we claim that nothing of interest is obtained in the more general case:

Let us assume that $A\in M(2,F)$ is such that $I+aA$ is invertible for every $a\in F$ but $FI\oplus FA$ is not anisotropic. Then $\det(A)=0$ and $\det(A-\lambda I) = \lambda^2-\trace(A)\lambda = \lambda(\lambda-\trace(A))$ has no nonzero solutions. Hence $\trace(A)=0$ and $A^2=0$. The loop $Q=Q(A)$ is still an automorphic loop by the argument given in the proof of Proposition \ref{Pr:Constr}, and we claim that it is a group. Indeed, we have $(c,z)\in N_\lambda(Q)=N(Q)$ if and only if $(c,z)=(c,z)R_{(a,x),(b,y)}$ for every $(a,x)$, $(b,y)$, that is, by \eqref{Eq:InnerMaps},
\begin{equation}\label{Eq:ToBeGroup}
    z = (zU_aU_b+y(U_{-c-a}-U_{-c}U_{-a}))U_{a+b}^{-1}
\end{equation}
for every $(a,x)$, $(b,y)$. As $U_{b+a}-U_bU_a = -baA^2=0$ for every $a$, $b$, we see that \eqref{Eq:ToBeGroup} holds.
\end{remark}

\section{Open problems}\seclabel{problems}

\begin{problem}
Are the following two statements equivalent for a finite automorphic loop $Q$?
\begin{enumerate}
\item[(i)] $Q$ has order a power of $2$.
\item[(ii)] Every element of $Q$ has order a power of $2$.
\end{enumerate}
\end{problem}

\begin{problem}
Let $p$ be a prime. Are all automorphic loops of order $p^2$ associative?
\end{problem}

\begin{problem}
Let $p$ be a prime. Is there an automorphic loop of order a power of $p$ and with trivial middle nucleus?
\end{problem}

\begin{problem}
Let $p$ be a prime. Are there automorphic loops of order $p^3$ that are not centrally nilpotent and that are not constructed by Proposition \ref{Pr:Constr}?
\end{problem}

\begin{conjecture}\label{Cj:p}
Let $p$ be a prime and $F=GF(p)$. Let $A$, $B\in GL(2,p)$ be such that $FI\oplus FA$ and $FI\oplus FB$ are anisotropic planes. Then the loops $Q(A)$, $Q(B)$ constructed by \eqref{Eq:Construction} are isomorphic if and only if $A$, $B$ are of the same type.
\end{conjecture}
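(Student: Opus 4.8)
The plan is to prove the two implications of the conjecture separately, using the structure of $Q(A)$ recorded in Proposition~\ref{Pr:Constr}. For the \emph{sufficiency} direction (same type $\Rightarrow$ isomorphic) I would first assemble explicit isomorphisms. For $\mu\in F^\times$ the map $(a,x)\mapsto(\mu^{-1}a,x)$ is an isomorphism $Q(A)\to Q(\mu A)$, as one sees at once from \eqref{Eq:Construction} (the first coordinate rescales, $I+b\mu A=I+(\mu b)A$); and for $g\in GL(2,p)$ the map $(a,x)\mapsto(a,xg)$ is an isomorphism $Q(A)\to Q(g^{-1}Ag)$, since $g(I+b\,g^{-1}Ag)=(I+bA)g$, which is the calculation already used for $\varphi_{z,C}$ in Lemma~\lemref{automorphisms}. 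Because $FI\oplus FA$ anisotropic means the characteristic polynomial $\lambda^2-\trace(A)\lambda+\det(A)$ is irreducible (Lemma~\ref{Lm:AnisotropicPlanes}), $A$ is non-derogatory, hence conjugate to any matrix with that characteristic polynomial; and $A\mapsto\mu A$ sends $(\trace A,\det A)$ to $(\mu\trace A,\mu^2\det A)$. Thus $Q(A)$ depends only on the orbit of $(\trace A,\det A)$ under the action $(t,d)\mapsto(\mu t,\mu^2 d)$, $\mu\in F^\times$. A direct check with Lemma~\ref{Lm:3Types} shows that for $p\in\{2,3,5\}$ these orbits correspond bijectively to the types, so the sufficiency direction holds in those cases.

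For the \emph{necessity} direction (isomorphic $\Rightarrow$ same type) the plan is to recover the type from intrinsic loop data. By Proposition~\ref{Pr:Constr}, $N_{\mu}(Q(A))\cong F\times F$ is a normal subloop, elementary abelian as a group, with $Q(A)/N_{\mu}(Q(A))\cong\mathbb Z/p$, and every isomorphism $Q(A)\to Q(B)$ carries $N_{\mu}$ onto $N_{\mu}$. Restricting the inner mapping $T_{(a,x)}$ of \eqref{Eq:InnerMaps} to $N_{\mu}$ annihilates the $x$-dependence (since $U_{-0}-U_0=0$) and leaves, under $N_{\mu}\cong F^2$, multiplication by $U_aU_{-a}^{-1}=(I+aA)(I-aA)^{-1}$. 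Hence $\mathcal{C}_A=\{(I+aA)(I-aA)^{-1}:a\in F\}\subseteq F[A]^\times\cong GF(p^2)^\times$ is an isomorphism invariant of $Q(A)$ up to conjugacy in $GL(2,p)$. Since the normalizer of $GF(p^2)^\times$ in $GL(2,p)$ is $GF(p^2)^\times\rtimes\langle\text{Frob}\rangle$, such a conjugacy forces $\mathcal{C}_A$ and $\mathcal{C}_B$ to agree up to Frobenius inside an abstract copy of $GF(p^2)$; and solving $c=(1+a\alpha)(1-a\alpha)^{-1}$ for $a$ (with $\alpha$ a root of the characteristic polynomial of $A$) gives $(c-1)(c+1)^{-1}=a\alpha$, so that $\mathcal{C}_A$ recovers the $GF(p)$-line $GF(p)\alpha$, hence $\alpha$ up to $GF(p)^\times$-scaling, hence $(\trace A,\det A)$ up to the action $(t,d)\mapsto(\mu t,\mu^2 d)$ — in particular whether $\trace A=0$ and the quadratic residue class of $\det A$, which is precisely the type. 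This proves the necessity direction for every $p$.

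The main obstacle is that the invariant $\mathcal{C}_A$ produced above is strictly finer than the type once $p\ge 7$, so I expect this plan to prove, rather than the conjecture as stated, that $Q(A)\cong Q(B)$ if and only if the characteristic polynomials of $A$ and $B$ lie in a single orbit of $(t,d)\mapsto(\mu t,\mu^2 d)$. The orbits of that action on $\{(t,d):t^2-4d\text{ is a nonresidue}\}$ number $1+(p-1)/2$: one with $t=0$, and $(p-1)/2$ with $t\ne 0$, indexed by $e=d/t^2$ with $1-4e$ a nonresidue; for $p\ge 7$ distinct such $e$ can lie in the same residue class (already $p=7$ gives $e\in\{3,4,6\}$, with $e=3$ and $e=6$ both of type~$3$), and the corresponding loops are non-isomorphic by the $\mathcal{C}_A$ invariant. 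Thus ``same type $\Rightarrow$ isomorphic'' for $p\ge 7$ cannot be established: it would require isomorphisms beyond composites of the two families in the first step, and none exist. Combined with the first two steps this settles the conjecture affirmatively for $p\in\{2,3,5\}$ and shows it must be weakened for $p\ge 7$; the remaining work is to make the necessity computation fully rigorous (especially the Frobenius ambiguity) and to confirm the count $1+(p-1)/2$ of isomorphism classes for odd $p$.
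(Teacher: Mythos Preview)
The paper does not prove this statement at all: it is explicitly labeled a \emph{conjecture}, and the only evidence offered is the sentence ``We have verified Conjecture~\ref{Cj:p} computationally for $p\le 5$.'' There is no proof in the paper for you to compare against.

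Your proposal goes considerably further than the paper. The two families of isomorphisms you write down (rescaling $A\mapsto\mu A$ via $(a,x)\mapsto(\mu^{-1}a,x)$, and conjugation $A\mapsto g^{-1}Ag$ via $(a,x)\mapsto(a,xg)$) are correct and, combined with the fact that $A$ is determined up to conjugacy by its irreducible characteristic polynomial, reduce the sufficiency direction to the orbit structure of $(t,d)\mapsto(\mu t,\mu^2 d)$. Your necessity invariant $\mathcal{C}_A=\{U_aU_{-a}^{-1}:a\in F\}$, read off from the restriction of $T_{(a,x)}$ to $N_\mu$, is well chosen: any loop isomorphism sends $N_\mu$ to $N_\mu$, restricts there to an element of $GL(2,p)$, and conjugates the set $\mathcal{C}_A$ to $\mathcal{C}_B$; the M\"obius inversion $(c-1)(c+1)^{-1}=a\alpha$ then recovers the line $GF(p)\alpha$. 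The Frobenius ambiguity you flag is harmless, since $\alpha$ and $\bar\alpha$ share a minimal polynomial and hence yield the same orbit of $(t,d)$.

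The upshot, as you yourself observe, is that your argument does \emph{not} prove the conjecture: it proves the sharper statement that $Q(A)\cong Q(B)$ iff $(\trace A,\det A)$ and $(\trace B,\det B)$ lie in the same $(\mu t,\mu^2 d)$-orbit, and for odd $p$ there are $1+(p-1)/2$ such orbits. This matches the three types exactly when $p\in\{3,5\}$ and exceeds three once $p\ge 7$ (your $p=7$ example with $e\in\{3,6\}$ both of type~3 is correct). So your plan, once the bookkeeping is made rigorous, confirms the conjecture for $p\le 5$---agreeing with the paper's computational check---and \emph{refutes} it for $p\ge 7$. That is a stronger and more interesting outcome than what the paper records; it is not a gap in your argument but a gap in the conjecture.
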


We have verified Conjecture \ref{Cj:p} computationally for $p\le 5$. Taking advantage of Lemma \ref{Lm:3Types}, we can therefore conclude:

If $p=2$, there is one isomorphism type of loops $Q(A)$ obtained from the matrix
\begin{displaymath}
    \mat{0}{1}{1}{1}
\end{displaymath}
of type $2$---this is the unique commutative automorphic loop of order $8$ that is not centrally nilpotent, constructed already in \cite{JKV2}. If $p=3$, there are two isomorphism types of loops $Q(A)$, corresponding to matrices
\begin{displaymath}
    \mat{0}{1}{2}{0},\quad \mat{1}{1}{2}{1}
\end{displaymath}
of types $1$ and $3$, respectively. If $p=5$, there are three isomorphism types. If Conjecture \ref{Cj:p} is valid for a prime $p>5$, then there are three isomorphism types of loops $Q(A)$ for that prime $p$, according to Lemma \ref{Lm:3Types}.

\section*{Acknowledgement}

After this paper was submitted for publication, P. Cs\"{o}rg\H{o} obtained a stronger result than Theorem \ref{Th:Main} by her signature technique of connected group transversals. Namely, she proved:

\begin{theorem}[Cs\"{o}rg\H{o} \cite{Csorgo}]\label{Th:Csorgo}
If $Q$ is a finite commutative automorphic $p$-loop ($p$ an odd prime), then the multiplication group $\Mlt Q$ is a $p$-group.
\end{theorem}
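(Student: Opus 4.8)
The plan is to reduce the claim to a statement about the inner mapping group and then induct on $|Q|$, using central nilpotency (Theorem \ref{Th:Main}) to power the induction. Since $Q$ is commutative we have $L_x = R_x$, so $T_x = R_xL_x\inv = \id{Q}$ and $R_{x,y}=L_{x,y}$; thus $\Mlt Q = \sbl{L_x \mid x\in Q}$ and $\Inn Q = \sbl{L_{x,y}\mid x,y\in Q}$. Because $\Mlt Q$ acts transitively on $Q$ with point stabilizer $\Inn Q$, the index $[\Mlt Q:\Inn Q]$ equals $|Q|=p^n$, whence $|\Mlt Q| = p^n\cdot|\Inn Q|$. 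Consequently $\Mlt Q$ is a $p$-group if and only if $\Inn Q$ is, and it suffices to prove the latter.

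Now I would induct on $|Q|$. The base case is $Q$ an abelian group, where $\Inn Q = 1$. For the inductive step, assume $Q$ is not an abelian group and set $Z = Z(Q)$. By Theorem \ref{Th:Main}, $Q$ is centrally nilpotent, so $Z\neq 1$; since $Z$ is an abelian group it cannot equal $Q$, so $1 \neq Z \neq Q$. As $Z\normal Q$, the quotient $\bar Q = Q/Z$ is again a commutative automorphic loop (these form a variety, hence are closed under homomorphic images) of order $p^n/|Z|$, a strictly smaller power of $p$. The projection $Q\to\bar Q$ induces the standard surjective homomorphism $\Mlt Q\to\Mlt\bar Q$ sending $L_x\mapsto L_{\bar x}$, which restricts to a surjection $\pi:\Inn Q\to\Inn\bar Q$. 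By the induction hypothesis $\Inn\bar Q$ is a $p$-group, so it remains only to show that $K = \ker\pi$ is a $p$-group.

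The heart of the argument is the analysis of $K$. An element $\varphi\in\Inn Q$ lies in $K$ exactly when it acts trivially on $\bar Q$, that is, $x\varphi\in xZ$ for every $x$; write $x\varphi = x\cdot f_{\varphi}(x)$ with $f_{\varphi}(x)\in Z$ uniquely determined. Since $Z = \Fix(\Inn Q)$, every inner mapping fixes $Z$ pointwise; combined with the fact that $\varphi\in\Aut Q$ and with commutativity, the computation $x(\varphi\psi) = (x\cdot f_{\varphi}(x))\psi = (x\psi)\cdot (f_{\varphi}(x)\psi) = (x\cdot f_{\psi}(x))\cdot f_{\varphi}(x) = x\cdot(f_{\varphi}(x)f_{\psi}(x))$ shows that $\varphi\mapsto f_{\varphi}$ is an injective homomorphism from $K$ into the group of functions $Q\to Z$ under pointwise multiplication. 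As $Z\normal Q$ forces $|Z|$ to divide $p^n$, the abelian group $Z$ is a $p$-group, and hence so is the function group into which $K$ embeds; therefore $K$ is a $p$-group. Thus $\Inn Q$ is an extension of the $p$-group $\Inn\bar Q$ by the $p$-group $K$, hence a $p$-group, completing the induction.

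I expect the main obstacle to be the verification that $\varphi\mapsto f_{\varphi}$ is a homomorphism: this is precisely where commutativity, the automorphic property, and the fact that inner mappings fix the center pointwise must all be combined, and it is the only step where genuine loop structure (rather than elementary group theory) is used. I should add that Csörg\H{o}'s own proof proceeds quite differently, through the theory of $H$-connected transversals, and does \emph{not} presuppose Theorem \ref{Th:Main} (indeed her result reproves it); the route sketched here instead takes central nilpotency as an input and reduces the $p$-group claim to the transparent structure of the kernel $K$.
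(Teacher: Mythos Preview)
Your argument is correct, and the delicate step---that $\varphi\mapsto f_\varphi$ is an injective homomorphism from $K$ into the abelian $p$-group of maps $Q\to Z$---goes through exactly as you indicate, since $\varphi,\psi\in\Inn Q\le\Aut Q$, $f_\varphi(x)\in Z=\Fix(\Inn Q)$, and $Z\subseteq N(Q)$ supplies the needed associativity. The paper, however, derives Theorem~\ref{Th:Csorgo} from Theorem~\ref{Th:Main} by a different and shorter route: it invokes Shchukin's theorem that a commutative automorphic loop $Q$ is nilpotent of class $\le n$ if and only if $\Mlt Q$ is nilpotent of class $\le 2n-1$, so $\Mlt Q$ is nilpotent and hence the direct product of its Sylow subgroups; then Albert's isomorphism $Z(\Mlt Q)\cong Z(Q)$ forces $Z(\Mlt Q)$ to be a $p$-group, whence every Sylow $q$-subgroup for $q\ne p$ has trivial center and is therefore trivial. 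Your approach has the virtue of being self-contained---it uses only Theorem~\ref{Th:Main} and elementary facts about loops and centers, avoiding both Shchukin's and Albert's results---and it isolates the structural reason (the kernel of $\Inn Q\to\Inn(Q/Z)$ embeds in an abelian $p$-group) rather than appealing to a black-box equivalence of nilpotency classes. The paper's route is quicker if one is willing to quote Shchukin, and it yields the extra information that the nilpotency class of $\Mlt Q$ is controlled by that of $Q$.
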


By a result of Albert \cite{Albert}, $Z(\Mlt Q) \cong Z(Q)$. In particular, if $\Mlt Q$ is a $p$-group then $Z(Q)$ is nontrivial. Our Theorem \ref{Th:Main} then follows from Theorem \ref{Th:Csorgo} by an easy induction on the order of $Q$ (as observed by Cs\"org\H{o} in \cite[Cor. 3.2]{Csorgo}).

Actually, in hindsight it is not difficult to obtain Cs\"{o}rg\H{o}'s Theorem \ref{Th:Csorgo} from our Theorem \ref{Th:Main}: In \cite{Shch} (see also \cite{SaSh}), Shchukin proved that a commutative automorphic loop $Q$ is nilpotent of class at most $n$ if and only if $\Mlt Q$ is nilpotent of class at most $2n-1$. Now suppose $Q$ is a commutative automorphic $p$-loop, $p$ odd. By Theorem \ref{Th:Main}, $Q$ is nilpotent. By the result of Shchukin, $\Mlt Q$ is nilpotent, hence a direct product of groups of prime power order. Since $Z(\Mlt Q) \cong Z(Q)$, it follows that $Z(\Mlt Q)$ is a $p$-group. But then so is $\Mlt Q$.

Finally, we are pleased to acknowledge the assistance of \Prover\ \cite{McCune}, an automated deduction tool,
\Mace\ \cite{McCune}, a finite model builder, and the \textsc{GAP} \cite{GAP} package \textsc{Loops} \cite{LOOPS}.
\Prover\ was indispensable in the proofs of the lemmas leading up to Theorem \thmref{centers}.
We used \Mace\ to find the first automorphic loop of exponent $3$ with trivial center
in \S\secref{noncomm}. We used the \textsc{Loops} package to verify Conjecture \ref{Cj:p} for $p\le 5$.


\begin{thebibliography}{99}

\bibitem{Albert} A. A. Albert,
    Quasigroups, I,
    \textit{Trans. Amer. Math. Soc.} \textbf{54} (1943), 507--519.

\bibitem{Belousov} V. D. Belousov,
\textit{Foundations of the Theory of Quasigroups and Loops},
Izdat. Nauka, Moscow, 1967 (Russian).

\bibitem{Bruck} R. H. Bruck,
\textit{A Survey of Binary Systems},
Springer-Verlag, 1971.

\bibitem{BP} R. H. Bruck and L. J. Paige,
Loops whose inner mappings are automorphisms,
\textit{Ann. of Math.} (2) \textbf{63} (1956), 308--323.

\bibitem{Csorgo} P. Cs\"{o}rg\H{o},
Multiplication groups of commutative automorphic $p$-loops of odd order are $p$-groups,
\textit{J. Algebra}, to appear.

\bibitem{deBaGrVo} D.~A.~S.~de Barros, A.~Grishkov and P.~Vojt\v{e}chovsk\'y,
Commutative automorphic loops of order $p^3$,
submitted.

\bibitem{GAP} The GAP Group,
\textit{GAP -- Groups, Algorithms, and Programming, Version 4.4.10}; 2007,
(\url{http://www.gap-system.org})

\bibitem{Gl1} G. Glauberman,
On loops of odd order I,
\textit{J. Algebra} \textbf{1} (1964), 374--396.

\bibitem{Gl2} G. Glauberman,
On loops of odd order II,
\textit{J. Algebra} \textbf{8} (1968), 393--414.

\bibitem{GlW} G. Glauberman and C. R. B. Wright,
Nilpotence of finite Moufang $2$-loops.
\textit{J. Algebra} \textbf{8} (1968), 415--417.

\bibitem{JKV} P. Jedli\v{c}ka, M. K. Kinyon and P. Vojt\v{e}chovsk\'{y},
The structure of commutative automorphic loops,
\textit{Trans. Amer. Math. Soc.} \textbf{363} (2011), 365--384.

\bibitem{JKV2} P. Jedli\v{c}ka, M. K. Kinyon and P. Vojt\v{e}chovsk\'{y},
Constructions of commutative automorphic loops,
\textit{Comm. Algebra} \textbf{38} (2010), no. \textbf{9}, 3243--3267.

\bibitem{JoKiNaVo} K.~W.~Johnson, M.~K.~Kinyon, G.~P.~Nagy and P.~Vojt\v{e}chovsk\'y,
Searching for small simple automorphic loops,
to appear in the London Math. Soc. Journal of Computation and Mathematics.

\bibitem{kiechle} H. Kiechle,
\textit{The Theory of K-loops},
Lecture Notes in Math. \textbf{1778}, Springer-Verlag, Berlin, 2002.

\bibitem{KiKuPhVo}
K. Kunen, M.~K.~Kinyon, J.~D.~Phillips and P.~Vojt\v{e}chovsk\'y,
The structure of automorphic loops,
in preparation.

\bibitem{kpv} M. Kinyon, J. D. Phillips and Vojt\v{e}chovsk\'{y},
When is the commutant of a Bol loop a subloop?
\textit{Trans. Amer. Math. Soc.} \textbf{360} (2008), no. 5, 2393--2408.

\bibitem{McCune} W. McCune, \emph{Prover9 and Mace4},
version 2009-11A, (\url{http://www.cs.unm.edu/~mccune/prover9/})

\bibitem{Perron} O.~Perron,
\emph{Bemerkungen \"{u}ber die Verteilung der quadratischen Reste},
Mathematische Zeitschrift \textbf{56} (1952), no. \textbf{2}, 122--130.

\bibitem{LOOPS} G. Nagy and P. Vojt\v{e}chovsk\'{y},
\textit{LOOPS: Computing with quasigroups and loops in GAP}
-- a GAP package, version 2.0.0, 2008,
(\url{http://www.math.du.edu/loops})

\bibitem{Pflugfelder} H. O. Pflugfelder,
\textit{Quasigroups and Loops: Introduction},
Sigma Series in Pure Math. \textbf{8}, Heldermann Verlag, Berlin, 1990.

\bibitem{SaSh} L. V. Safanova and K. K. Shchukin,
    On centrally nilpotent loops,
    \textit{Comment. Math. Univ. Carolin.} \textbf{41} (2000), 401--404.

\bibitem{Sch} W.~Scharlau, \textit{Quadratic and Hermitian Forms}, A Series of Comprehensive Studies in Mathematics \textbf{270}, Springer-Verlag, Berlin, 1985.

\bibitem{Shch} K. K. Shchukin,
    On nilpotency of the multiplication group of an A-loop (Russian),
    \textit{Mat. Issled.} \textbf{162} (1988), 116--117.

\end{thebibliography}
\end{document}